\newcommand{\preprint}[1]{}
\newcommand{\hide}[1]{}
\numberwithin{equation}{section}
\theoremstyle{plain}
\newtheorem{thm}{Theorem}[section]
\newtheorem{cor}[thm]{Corollary}
\newtheorem{lem}[thm]{Lemma}
\newtheorem{assumption}[thm]{Assumption}
\theoremstyle{definition}
\newtheorem{defi}[thm]{Definition}
\newtheorem{rem}[thm]{Remark}
\newcommand{\K}{{\mathcal K}}
\newcommand{\M}{{\mathcal M}}
\newcommand{\fM}{{\mathfrak M}}
\newcommand{\tfM}{\widetilde{{\mathfrak M}}}
\newcommand{\PP}{{\mathbb P}}
\newcommand{\U}{{\mathcal U}}
\newcommand{\X}{{\mathcal X}}
\newcommand{\RealNumbers}{{\mathbb R}}
\newcommand{\Integers}{{\mathbb Z}}
\newcommand{\ComplexNumbers}{{\mathbb C}}
\newcommand{\RationalNumbers}{{\mathbb Q}}
\newcommand{\StructureSheaf}[1]{{\mathcal O}_{#1}}
\newcommand{\EndProof}{\hfill  $\Box$}
\newcommand{\rank}{{\rm rank}}
\newcommand{\Pic}{{\rm Pic}}
\newcommand{\Sym}{{\rm Sym}}
\newcommand{\Ext}{{\rm Ext}}
\newcommand{\SheafHom}{{\mathcal H}om}
\newcommand{\SheafEnd}{{\mathcal E}nd}
\newcommand{\SheafExt}{{\mathcal E}xt}
\newcommand{\Wedge}[1]{\wedge^{#1}}
\newcommand{\Choose}[2]{\left(\!\!\begin{array}{c}#1\\#2\end{array}\!\!\right)}
\begin{document}
\title[Naturality of the hyperholomorphic sheaf]
{Naturality of the hyperholomorphic sheaf over the cartesian square of a manifold of $K3^{[n]}$-type}
\author{Eyal Markman}
\address{Department of Mathematics and Statistics, 
University of Massachusetts, Amherst, MA 01003, USA}
\email{markman@math.umass.edu}

\date{\today}

\begin{abstract}
Let $\M$ be a $2n$-dimensional smooth and compact moduli space of stable sheaves on a $K3$ surface $S$ and 
$\U$ a universal sheaf over $S\times \M$. Over $\M\times\M$ there exists a natural reflexive sheaf $E$ of rank $2n-2$,
namely the first relative extension sheaf of the two pullbacks of $\U$ to $\M\times S\times \M$. We prove that $E$ is 
$\omega\boxplus \omega$-slope-stable with respect to every K\"{a}hler class $\omega$ on $\M$. The sheaf $E$ is known to deform to a sheaf $E'$ over $X\times X$, for every manifold $X$ deformation equivalent to $\M$, and  we prove that $E'$ is $\omega\boxplus \omega$-slope-stable with respect to every K\"{a}hler class $\omega$ on $X$. This triviality of the stability chamber structure combines with a 
result of S. Mehrotra and the author to show that the deformed sheaf $E'$ is canonical; each component of the 
the moduli space of marked triples $(X,\eta,E')$, where $\eta$ is a marking of $H^2(X,\Integers)$, maps isomorphically onto the component of the moduli space of marked pairs $(X,\eta)$ by forgetting $E'$. Consequently, the pretriangulated $K3$ category associated to the pair $(X\times X,E')$ in \cite{generalized-deformations} depends only on the isomorphism class of $X$.
\end{abstract}

\maketitle
\tableofcontents
%*****************************************************************
%
%*****************************************************************
\section{Introduction}
%*****************************************************************
%
%*****************************************************************
\subsection{Stability of modular sheaves}
An {\em irreducible holomorphic symplectic manifold} is a simply connected compact K\"{a}hler manifold, such that $H^0(X,\Omega^2_X)$ is one dimensional spanned by an everywhere non-degenerate holomorphic $2$-form. 
Every K\"{a}hler manifold $X$ deformation equivalent to the Hilbert scheme $S^{[n]}$ 
of length $n$ subschemes of a $K3$ surface $S$ is an irreducible holomorphic symplectic manifold \cite{beauville-varieties-with-zero-c-1}.
Such $X$ is said to be of {\em $K3^{[n]}$-type}. Every smooth and projective moduli space $\M$ of stable sheaves  on a $K3$ surface is 
an irreducible holomorphic symplectic manifold of $K3^{[n]}$-type,  by results of Huybrechts, Mukai, O'Grady, and Yoshioka \cite{ogrady-hodge-str,yoshioka-abelian-surface}.

Let $S$ be a $K3$ surface, $H$ an ample line bundle on $S$, $v$ a primitive Mukai vector, 
and $\M:=\M_H(v)$ a smooth and compact moduli space of $H$-stable sheaves on $S$ of Mukai vector $v$.
Assume that $2n:=\dim_\ComplexNumbers\M\geq 4.$ 
Denote by $\pi_S$ and $\pi_\M$ the two projections from $S\times \M$. 
Let $\U$ be a universal sheaf over $S\times \M$. 
There is a Brauer class $\theta$ in the cohomology group $H^2_{an}(\M,\StructureSheaf{\M}^*)$, with respect to the analytic topology of $\M$, such that $\U$ is $\pi_\M^*\theta$-twisted.
Let $\pi_{ij}$ be the projection from $\M\times S\times \M$ onto the product of the $i$-th and $j$-th factors. 
Let 
\begin{equation}
\label{eq-E}
E:= \SheafExt^1_{\pi_{13}}\left(\pi_{12}^*\U,\pi_{23}^*\U\right)
\end{equation}
be the relative extension sheaf over $\M\times \M$. 
Let $f_i$ be the projection from $\M\times\M$ onto the $i$-th factor. $E$ is a reflexive $f_1^*\theta^{-1}f_2^*\theta$-twisted sheaf of rank $2n-2$, which is locally free away from the diagonal, by \cite[Prop. 4.1]{markman-hodge}.
Given a K\"{a}hler class $\omega$ on $\M$, denote by
$\tilde{\omega}:=f_1^*\omega+f_2^*\omega$ the corresponding K\"{a}hler class over $\M\times \M$. 

\begin{defi}
\label{def-stability}
Let $X$ be a $d$-dimensional compact K\"{a}hler manifold and $\omega$ a K\"{a}hler class on $X$.
The {\em $\omega$-degree} of a coherent sheaf $G$ on $X$ is $\deg_\omega(G):=\int_X\omega^{d-1}c_1(G)$. 
If $G$ is torsion free of rank $r$,  its {\em $\omega$-slope} is $\mu_\omega(G):=\deg_\omega(G)/r$. 
Let $E$ be a torsion free coherent sheaf over $X$, 
which is $\theta$-twisted with respect to some Brauer class $\theta$. 
The sheaf $E$ is {\em $\omega$-slope-semistable}, if for every subsheaf $F$ of $E$, satisfying $0<\rank(F)<\rank(E)$, we have
\[
\deg_\omega\left(\SheafHom(E,F)\right)\leq 0.
\]
$E$ is said to be {\em $\omega$-slope-stable} if strict inequality holds above. $E$ is said to be 
{\em $\omega$-slope-polystable}, if it is $\omega$-slope-semistable as well as the direct sum of 
$\omega$-slope-stable sheaves. 
%$E$ is said to be {\em $\omega$-Hermite-Einstein stable},  
%if it is $\omega$-slope-stable and $\SheafEnd(E)$ is $\omega$-slope-polystable.
\end{defi}

%\begin{rem}
%\label{remark-Hermite-Einstein}
%Assume that $E$ is reflexive. If $E$ is 
%$\omega$-slope-stable, then $\SheafEnd(E)$  is expected to be $\omega$-slope-polystable
%and so $E$ is expected to be $\omega$-Hermite-Einstein stable as well.
%This is known when the Brauer class $\theta$ is trivial, by
%\cite[Theorem 3]{bando-siu}, and when the order of the Brauer class is equal to the rank of $E$, 
%by \cite[Prop. 7.8]{markman-hodge}. Bando and Siu proved that an $\omega$-polystable reflexive (untwisted) sheaf 
%admits an admissible Hermite-Einstein metric \cite[Theorem 3]{bando-siu}, motivating the above terminology.
%\end{rem}

\begin{thm}
\label{main-thm}
The sheaf $E$, given in Equation (\ref{eq-E}), is $\tilde{\omega}$-slope-stable with respect to every K\"{a}hler class $\omega$ on $\M$.
\end{thm}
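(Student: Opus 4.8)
The plan is to reduce the theorem to the slope-stability of the restrictions of $E$ to the fibers of the two projections $f_1,f_2\colon\M\times\M\to\M$, and then to combine the two fiberwise inequalities by a see-saw computation. The first step is to record that $\mu_{\tilde\omega}(E)=0$. Let $\sigma\colon\M\times\M\to\M\times\M$ be the involution interchanging the two factors. Away from the diagonal the fiber of $E$ over $(\F_1,\F_2)$ is $\Ext^1_S(\F_1,\F_2)$, and Serre duality on the $K3$ surface $S$ gives $\Ext^1_S(\F_1,\F_2)\cong\Ext^1_S(\F_2,\F_1)^\vee$; since $E$ and $\sigma^*E^\vee$ are both reflexive and agree off the diagonal, $\sigma^*E\cong E^\vee$ on all of $\M\times\M$. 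Hence $\sigma^*c_1(E)=-c_1(E)$, while $\sigma^*\tilde\omega=\tilde\omega$, so $\sigma$-invariance of integration forces $\deg_{\tilde\omega}(E)=\int_{\M\times\M}\tilde\omega^{4n-1}c_1(E)=0$. Moreover, since $H^1(\M)=0$ gives the K\"{u}nneth decomposition $H^2(\M\times\M)=f_1^*H^2(\M)\oplus f_2^*H^2(\M)$, the same anti-invariance yields $c_1(E)=f_1^*\alpha-f_2^*\alpha$ for a single class $\alpha\in H^2(\M,\Integers)$. It then suffices to prove that every saturated subsheaf $F\subset E$ with $0<\rank(F)<\rank(E)$ satisfies $\mu_{\tilde\omega}(F)<0$, because then $\deg_{\tilde\omega}\SheafHom(E,F)=\rank(E)\deg_{\tilde\omega}(F)<0$, using $\deg_{\tilde\omega}(E)=0$.

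Next I would set up the see-saw. Writing $c_1(F)=f_1^*a+f_2^*b$ with $a,b\in H^2(\M)$, a direct expansion of $\tilde\omega^{4n-1}$ together with the K\"{u}nneth decomposition gives $\deg_{\tilde\omega}(F)=\kappa\cdot\deg_\omega(a+b)$ for a positive constant $\kappa$ depending only on $\omega$; thus it is enough to prove $\deg_\omega(a+b)<0$. Restricting along a general fiber $\iota_{\F_1}\colon\{\F_1\}\times\M\hookrightarrow\M\times\M$ one computes $c_1(\iota_{\F_1}^*E)=-\alpha$ and $c_1(\iota_{\F_1}^*F)=b$, while restricting along a general fiber $\iota^{\F_2}\colon\M\times\{\F_2\}\hookrightarrow\M\times\M$ one computes $c_1\big((\iota^{\F_2})^*E\big)=\alpha$ and $c_1\big((\iota^{\F_2})^*F\big)=a$. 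For general $\F_1,\F_2$ the two restricted sheaves $E_{\F_1}:=\iota_{\F_1}^*E$ and $E^{(2)}_{\F_2}:=(\iota^{\F_2})^*E$ are torsion-free of rank $2n-2$, and $\iota_{\F_1}^*F\subset E_{\F_1}$, $(\iota^{\F_2})^*F\subset E^{(2)}_{\F_2}$ are torsion-free subsheaves of rank $s:=\rank(F)$.

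Granting that both $E_{\F_1}$ and $E^{(2)}_{\F_2}$ are $\omega$-slope-stable on $\M$ for general $\F_1,\F_2$, the definition of stability yields the two inequalities
\[
\frac{\deg_\omega(b)}{s}<\frac{\deg_\omega(-\alpha)}{2n-2},\qquad \frac{\deg_\omega(a)}{s}<\frac{\deg_\omega(\alpha)}{2n-2}.
\]
Adding them cancels the contribution of $\alpha$ and gives $\deg_\omega(a+b)<0$, hence $\mu_{\tilde\omega}(F)=\deg_{\tilde\omega}(F)/s<0$, which is exactly the desired conclusion. Reducing to saturated $F$ is harmless, since saturation does not decrease the slope.

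The main obstacle is the input that the fiberwise restrictions $E_{\F_1}$ and $E^{(2)}_{\F_2}$ — the tautological sheaves on $\M$ with fibers $\Ext^1_S(\F_1,\cdot)$ and $\Ext^1_S(\cdot,\F_2)$ — are $\omega$-slope-stable for \emph{every} K\"{a}hler class $\omega$ and for general $\F_1,\F_2$. Unlike $E$, these restricted sheaves are not hyperholomorphic on $\M$, since the slice $\{\F_1\}\times\M$ is a complex submanifold only for the chosen complex structure and not for the full twistor family; consequently their stability cannot be read off from the twistor deformation and must be established separately, either by a direct analysis of the monodromy of $\Ext^1_S(\F_1,\cdot)$ or by identifying these sheaves with modular sheaves whose stability for all K\"{a}hler classes is already known. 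I expect this to carry the bulk of the technical work. A secondary point requiring care is that $E$ is locally free only away from the diagonal: one must verify that a general pair of fibers meets the diagonal and the singular loci of $F$ in the expected codimension, so that the restricted inclusions remain injective with torsion-free cokernel and the Chern-class identities above hold. Finally, the $f_1^*\theta^{-1}f_2^*\theta$-twist restricts to the untwisted, respectively $\theta$-twisted, setting on the two families of fibers and does not affect the slope comparison.
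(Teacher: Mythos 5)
Your see-saw framework is internally consistent (the reduction to $\deg_\omega(a+b)<0$, the cancellation of $\alpha$ upon adding the two fiberwise inequalities, and the handling of the twist are all fine in outline), but the proposal has a genuine gap, and it is not a peripheral one: the entire argument rests on the unproven assertion that the restrictions $E_{\F_1}=\SheafExt^1_{\pi_\M}(\pi_S^*\F_1,\U)$ and $E^{(2)}_{\F_2}=\SheafExt^1_{\pi_\M}(\pi_S^*\U,\F_2)$ are $\omega$-slope-stable on $\M$ for \emph{every} K\"ahler class $\omega$ and general $\F_1,\F_2$. You flag this yourself, but it is not a technical lemma to be filled in later --- it is a statement of essentially the same depth as the theorem, and in fact less accessible. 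As you correctly observe, these fiberwise sheaves are not hyperholomorphic (their $c_2(\SheafEnd)$ does not stay of type $(2,2)$ along twistor deformations), so the one mechanism known to produce stability with respect to \emph{all} K\"ahler classes is unavailable for them; there is also no result in the literature establishing it, and there is no a priori reason their stability chamber structure in the K\"ahler cone is trivial. So the proposal defers, rather than resolves, the whole difficulty.

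The paper's proof localizes in the opposite direction: instead of restricting $E$ to the fibers of the projections, it analyzes $E$ along the diagonal. One blows up the diagonal, $\beta:Y\rightarrow \M\times\M$, and uses the fact (Assumption \ref{assumption-V}, verified for $E$ by \cite[Prop. 4.1]{markman-hodge}) that $V:=(\beta^*E)_{fr}(D)$ restricts to the exceptional divisor $D\cong\PP(T\M)$ as the symplectic quotient bundle $\ell^\perp/\ell$. A destabilizing subsheaf $F\subset E$ then induces a subsheaf $G'\subset\ell^\perp/\ell$, and the degree of $\SheafHom(E,F)$ is bounded by an intersection number involving $c_1(G')$ (Lemma \ref{lemma-degree-of-Hom-E-F-in-terms-of-G-prime}). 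The universal positivity input --- the analogue of what your fiberwise stability was supposed to provide --- is instead the $\omega$-slope-stability of $\Sym^t(T\M)$ for every K\"ahler class $\omega$, which follows from Yau's theorem together with the $Sp(n)$ holonomy of an irreducible holomorphic symplectic manifold (Lemma \ref{lemma-sym-TX}). That statement is available for all K\"ahler classes at once, which is exactly why the paper's route yields the triviality of the stability chamber structure, whereas your route would require first proving an analogous all-classes statement for non-modular tautological-type sheaves on $\M$.
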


The Theorem follows from the more general Theorem \ref{thm-stability} stated below. 
The Chern class $c_2(\SheafEnd(E))$, of the sheaf $E$ in
Theorem \ref{main-thm}, flatly deforms to a class of Hodge type $(2,2)$ on the cartesian square $X\times X$ of every manifold of $K3^{[n]}$-type \cite[Prop. 1.2]{markman-hodge}. 
%If, in addition, the sheaf $\SheafEnd(E)$ is $\tilde{\omega}$-slope-polystable, 
This fact combines with Theorem \ref{main-thm} to imply that the sheaf 
$E$ is {\em $\tilde{\omega}$-hyperholomorphic} in the sense of Verbitsky \cite{kaledin-verbitski-book} (see also \cite[Cor. 6.11]{markman-hodge}). 
%The sheaf $E$ is thus $\tilde{\omega}$-hyperholomorphic, for every K\"{a}hler class $\omega$ on $\M$, 
%whenever the moduli space $\M$ is fine (the universal sheaf $\U$ is untwisted), 
%by Theorem \ref{main-thm} and Remark \ref{remark-Hermite-Einstein}. 
Verbitsky proved a very powerful deformation theoretic result for such sheaves \cite[Theorem 3.19]{kaledin-verbitski-book}.
Associated to a K\"{a}hler class $\omega$ on $\M$ is a {\em twistor family}
$\X\rightarrow \PP^1_\omega$ deforming $\M$ \cite{huybrects-basic-results}.
Verbitsky's theorem implies that $E$ extends to a reflexive sheaf over the fiber square $\X\times_{\PP^1_\omega}\X$ of the twistor family  associated to every K\"{a}hler class $\omega$ on $\M$.
Verbitsky's theorem, applied to the sheaf $E$ in Theorem \ref{main-thm},   
is central to our proof  with F. Charles of the Standard Conjectures for projective manifolds of $K3^{[n]}$-type
and to our work with S. Mehrotra on pretriangulated $K3$-categories associated to manifolds of $K3^{[n]}$-type
\cite{lefschetz,torelli,generalized-deformations}. 
%These four papers  
%use as crucial input the deformations of the pair $(\M\times \M,E)$ of the above Theorem.

The first example of a moduli space $\M$, for which the above Theorem holds, was given in \cite[Theorem 7.4]{markman-hodge}.
In that example the order of the Brauer class was shown to be equal to the rank of $E$, and so $E$ does not have any non-zero subsheaf of lower rank. Such a maximally twisted reflexive sheaf is thus $\tilde{\omega}$-slope-stable 
%$\tilde{\omega}$-Hermite-Einstein stable (and so $\tilde{\omega}$-hyperholomorphic)
with respect to every K\"{a}hler class $\omega$ on $X$.
% by Remark \ref{remark-Hermite-Einstein}. 

The special case of Theorem \ref{main-thm}, where $\M=S^{[n]}$ is the Hilbert scheme of length $n$ subschemes of a $K3$ surface $S$ 
with a trivial Picard group, was proven earlier in \cite[Theorem 1.1(1)]{markman-stability}. In that case it was proven that, though untwisted, 
$E$ again does not have any non-zero subsheaf of lower rank.
%of Picard rank $\leq 19$, 

%*****************************************************************
%
%*****************************************************************
\subsection{Stability in terms of the singularities along the diagonal}
Let $X$ be an irreducible holomorphic symplectic manifold of complex dimension $2n>2$. 
Let $\beta:Y\rightarrow X\times X$ be the blow-up of the diagonal $\Delta\subset X\times X$. We get the diagram
\[
\xymatrix{
D \ar[r]^{\iota} \ar[d]_{p} & Y \ar[d]^{\beta}
\\
X \ar[r]_{\delta} & X\times X,
}
\]
where $\iota$  is the natural embedding, $\delta$ the diagonal embedding, and $p$ the natural projection. Note that $D$ is isomorphic to the projectivization of 
$TX$. Denote by $\ell\subset p^*TX$ the tautological line subbundle and let $\ell^\perp\subset p^*TX$ be its symplectic-orthogonal subbundle. Denote by $f_i$ the projection from $X\times X$ onto the $i$-th factor. 
Let $E$ be a $f_1^*\theta^{-1}f_2^*\theta$-twisted 
reflexive sheaf of rank $2n-2$ over $X\times X$, for some Brauer class $\theta$ on $X$,  which is locally free away from the diagonal.
Given a coherent sheaf $C$, denote by $C_{fr}$ the quotient of $C$ by its torsion subsheaf.
Set $V:=(\beta^*E)_{fr}(D)$. 
%Let $\tau$ be the automorphism of $X\times X$ interchanging the factors and denote by $\tilde{\tau}$ the induced automorphism of $Y$.

\begin{assumption}
\label{assumption-V}
Assume that $V$ is locally free 
%$\tilde{\tau}^*V\cong V^*$, 
and the restriction of $V$ to $D$  is isomorphic to $\ell^\perp/\ell$. 
\end{assumption}

\begin{thm}
\label{thm-stability}
The sheaf $E$ is $\tilde{\omega}$-slope-stable with respect to every K\"{a}hler class $\omega$ on $X$.
\end{thm}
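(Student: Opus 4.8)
The plan is to argue by contradiction and to exploit the simple topology of $X\times X$ in order to reduce the $\tilde\omega$-slope inequality to a computation along the diagonal. Since $X$ is simply connected, $H^1(X,\RealNumbers)=0$, so the K\"unneth decomposition gives $H^2(X\times X,\RealNumbers)=f_1^*H^2(X,\RealNumbers)\oplus f_2^*H^2(X,\RealNumbers)$, with no mixed summand. Expanding $\tilde\omega^{4n-1}=(f_1^*\omega+f_2^*\omega)^{4n-1}$ and keeping only the monomials whose bidegrees are $(2n,2n-1)$ and $(2n-1,2n)$ (the only ones that survive integration against a class pulled back from a single factor), one finds, for every saturated $F\subset E$ of rank $r$ with $0<r<2n-2$, that $\deg_{\tilde\omega}\SheafHom(E,F)$ is a fixed positive multiple of $\int_X\omega^{2n-1}\,\delta^*c_1(\SheafHom(E,F))$. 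Thus $\tilde\omega$-slope-stability is equivalent to $\int_X\omega^{2n-1}\,\delta^*c_1(\SheafHom(E,F))<0$ for every such $F$: only the \emph{horizontal} part of $c_1$, its restriction to the diagonal, ever enters.

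I would next pin down $\delta^*c_1(E)$. Assumption \ref{assumption-V} identifies $V|_D$ with $\ell^\perp/\ell$, and because $c_1(X)=0$ the filtration $\ell\subset\ell^\perp\subset p^*TX$ together with the symplectic identification $p^*TX\cong(p^*TX)^*$ forces $c_1(\ell^\perp/\ell)=0$ on all of $D$. Transporting this back through $V=(\beta^*E)_{fr}(D)$ shows that $\delta^*c_1(E)=0$, so the target inequality reduces to $\deg_\omega(\delta^*c_1(F))<0$. Restricting to a generic horizontal slice $X\times\{y\}$ and a generic vertical slice $\{x\}\times X$, on each of which $\omega$ is a genuine K\"ahler class, rewrites the left-hand side as $\deg_\omega c_1(F|_{X\times\{y\}})+\deg_\omega c_1(F|_{\{x\}\times X})$. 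Moreover $\delta^*c_1(E)=0$ means the two slice sheaves $E|_{X\times\{y\}}$ and $E|_{\{x\}\times X}$ carry opposite first Chern classes, hence opposite $\omega$-slopes. It therefore suffices to prove that these slice sheaves are $\omega$-slope-stable on $X$ for generic slices: a proper rank-$r$ subsheaf then has strictly smaller $\omega$-slope, and because the two slice-slopes cancel, the two strict bounds sum precisely to the required strict inequality.

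The stability of the slice sheaves is where Assumption \ref{assumption-V} does its real work. Each slice sheaf is reflexive, locally free away from the single point at which the slice meets $\Delta$, and its blow-up resolution restricts over the exceptional $\PP(T_xX)\cong\PP^{2n-1}$ to $(\ell^\perp/\ell)|_{\PP^{2n-1}}$. This is the homogeneous symplectic bundle attached to the tautological flag of the symplectic vector space $T_xX$; being induced by an irreducible representation of the relevant parabolic, it is slope-stable of degree zero on $\PP^{2n-1}$ (for $n=2$ it is the null-correlation bundle). I would combine this fibrewise stability with the slope-stability of $TX$ itself, which holds for every K\"ahler class on an irreducible holomorphic symplectic manifold because the holonomy acts irreducibly, in order to bound the degree of a hypothetical destabilizing subsheaf of a slice sheaf, using that $\ell^\perp/\ell$ is realized as a symplectic subquotient of $p^*TX$.

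The main obstacle is exactly this last coupling. Restricting a destabilizing subsheaf to the exceptional locus and testing it in the limit where the fibres of $\PP(TX)\to X$ are small controls only the \emph{vertical} (fibrewise) degree, and fibre-stability of $\ell^\perp/\ell$ by itself never constrains the \emph{horizontal} degree that governs the slope on the slice; indeed the vertical term dominates every small-fibre limit, so it cannot on its own produce the sign we need. Overcoming this demands a genuinely global input on $X$: one must show that no subsheaf with nonnegative horizontal determinant-degree can lift compatibly through the filtration $\ell\subset\ell^\perp\subset p^*TX$, which is precisely where the stability of $TX$, equivalently the irreducibility of the holonomy representation, has to be invoked. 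I expect the delicate bookkeeping, namely verifying that restrictions to generic slices and to $D$ remain subsheaves of the correct rank, controlling the torsion killed in passing from $\beta^*E$ to $V$, and matching multiplicities so that the horizontal degree is cleanly isolated, to be the technical heart of the argument.
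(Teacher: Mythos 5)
Your opening reduction is sound and matches the paper's own first step: expanding $\tilde{\omega}^{4n-1}$ via K\"unneth shows $\deg_{\tilde{\omega}}\SheafHom(E,F)$ is a positive multiple of $\int_X\omega^{2n-1}\delta^*c_1(\SheafHom(E,F))$ (this is Equation (\ref{eq-degree-via-restriction-to-diagonal})), and $\delta^*c_1(E)=0$ does follow from Assumption \ref{assumption-V} via the blow-up, since $\ell^\perp/\ell$ is symplectic. But from there you pivot to a reduction that creates, rather than solves, the problem: you propose to deduce the theorem from $\omega$-slope-stability of the slice sheaves $E|_{X\times\{y\}}$ and $E|_{\{x\}\times X}$ on $X$. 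That statement is at least as hard as the theorem itself --- indeed harder in one respect, because a destabilizing subsheaf of a slice sheaf need not extend to a subsheaf of $E$ over $X\times X$, so slice stability does not follow from (and is not needed for) the theorem. And you do not prove it: your last paragraph explicitly concedes that fibrewise stability of $\ell^\perp/\ell$ on $\PP(T_xX)$ only controls the vertical degree, that the horizontal degree requires "a genuinely global input," and that this coupling is "the technical heart of the argument." That heart is exactly what is missing, so the proposal is a reduction of the stated theorem to an unproven statement of comparable or greater difficulty.

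For comparison, the paper never restricts to slices; it keeps the whole exceptional divisor $D=\PP(TX)$ in play, which is what makes the global input available. Writing $c_1(G')=p^*\alpha-kh$ for the saturated image $G'\subset\ell^\perp/\ell$ of the destabilizing data (Lemma \ref{lemma-c-1-G-prime-not-multiple-of-h}), it supplies the missing horizontal control in two steps, both absent from your proposal: (i) $\alpha\neq 0$, proved by the $Sp(n)$ representation-theoretic vanishing $H^0(X,(\Wedge{j}TX)\otimes\Sym^t T^*X)=0$ for $t>1$ (Lemma \ref{lemma-sym-TX}), which rules out $c_1(G')$ being a multiple of $h$; and (ii) $\int_X\omega^{2n-1}\alpha\leq 0$, proved by twisting by $\ell^{-N}$, pushing forward to embed $p_*(G'\otimes\ell^{-N})$ into the $\omega$-polystable, degree-zero bundle $(\Wedge{2}TX)\otimes\Sym^{N+1}T^*X$, and identifying $\alpha$ as the leading coefficient in $N$ of $c_1(Rp_*(G'\otimes\ell^{-N}))$ via Grothendieck--Riemann--Roch. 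Strictness is then a separate argument --- openness of $\omega\mapsto\omega^{2n-1}$ by Hard Lefschetz combined with $\alpha\neq 0$ --- whereas in your scheme strictness was to come for free from the unproven slice stability. Without an analogue of steps (i) and (ii), your approach cannot produce the sign of the horizontal degree, which is precisely the content of the theorem.
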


The Theorem is proven in Section \ref{sec-proof-of-main-theorem}.

\begin{rem}
\label{rem-main-theorem-is-a-special-case}
Assumption \ref{assumption-V} holds for the sheaf $E$ in Equation (\ref{eq-E}), by 
\cite[Prop. 4.1]{markman-hodge}. Theorem \ref{main-thm} thus follows from the more general Theorem \ref{thm-stability}.
\end{rem}

%*****************************************************************
%
%*****************************************************************
\subsection{An isomorphism of two moduli spaces}

The second integral cohomology of an irreducible holomorphic symplectic manifold $X$ 
comes with a non-degenerate symmetric bilinear pairing  
known as the {\em Beauville-Bogomolov-Fujiki pairing}. Its signature is $(3,b_2(X)-3)$, 
where $b_2(X)$ is the second Betti number \cite{beauville-varieties-with-zero-c-1}. 
A {\em $\Lambda$-marking} for $X$ is an isometry $\eta:H^2(X,\Integers)\rightarrow \Lambda$ with a lattice $\Lambda$.
The moduli space ${\mathfrak M}_\Lambda$ of isomorphism classes of $\Lambda$-marked irreducible holomorphic symplectic manifolds $(X,\eta)$ is a non-Hausdorff manifold of dimension $\rank(\Lambda)-2$ \cite{huybrects-basic-results}. 

The pair $(\M\times \M,E)$  in Theorem \ref{main-thm} is known to deform to all cartesian squares 
$X\times X$ of manifolds of $K3^{[n]}$-type \cite[Theorem 1.3]{markman-hodge}. 
The sheaf $E$ is infinitesimally rigid, $\Ext^1(E,E)=0$, by \cite[Lemma 5.2]{generalized-deformations}. We describe next a global analogue of these two facts.

%Let $X$ be an irreducible holomorphic symplectic manifold of $K3^{[n]}$-type. 
In \cite{torelli} we constructed a moduli space ${\widetilde{\mathfrak M}}_\Lambda$ of equivalence classes 
of triples $(X,\eta,E)$, where $X$ is of $K3^{[n]}$-type, $n\geq 2$, 
$\eta$ is a $\Lambda$-marking for $X$, and $E$ is a rank $2n-2$ reflexive infinitesimally rigid twisted sheaf over $X\times X$,
which is $\tilde{\omega}$-slope-stable\footnote{In \cite{torelli}
it was also assumed that the sheaf $\SheafEnd(E)$ is $\tilde{\omega}$-slope-polystable, but the latter property follows from 
the $\tilde{\omega}$-slope-stability of $E$, as was proven later in \cite[Prop. 6.5]{markman-hodge} for twisted sheaves (and is well known for untwisted sheaves).
}
%$\tilde{\omega}$-Hermite-Einstein stable\footnote{The term $\tilde{\omega}$-slope-stable was defined 
%in \cite[Def. 1.2]{torelli} to mean $\tilde{\omega}$-Hermite-Einstein stable in the sense of 
%Definition \ref{def-stability} above and the weaker notion of slope-stability in Definition \ref{def-stability} 
%was not considered at all in \cite{torelli}.} 
with respect to some K\"{a}hler class $\omega$ on $X$. 
Two pairs $(X,\eta,E)$ and $(X',\eta',E')$ are {\em equivalent}, if there exists an isomorphism $f:X\rightarrow X'$, such that 
$\eta'=\eta\circ f^*$, and the sheaves $\SheafEnd(E)$ and $(f\times f)^*\SheafEnd(E')$ are isomorphic as sheaves of algebras.
The sheaf $E$ of every triple in ${\widetilde{\mathfrak M}}_\Lambda$ 
is assumed to satisfy \cite[Condition 1.6]{torelli}, which implies 
Assumption \ref{assumption-V} above. Fix a connected component ${\widetilde{\mathfrak M}}_\Lambda^0$ of 
${\widetilde{\mathfrak M}}_\Lambda$ and let 
\[
\phi:{\widetilde{\mathfrak M}}_\Lambda^0 \rightarrow {\mathfrak M}_\Lambda^0
\]
be the forgetful morphism sending a triple $(X,\eta,E)$ to the marked pair $(X,\eta)$, where 
${\mathfrak M}_\Lambda^0$ is the corresponding connected component of ${\mathfrak M}_\Lambda$.
The forgetful morphism $\phi$ is a surjective local homeomorphism, which is generically injective, by \cite[Theorems 1.9 and 6.1]{torelli}
(conditional on \cite[Conj. 1.12]{torelli}). 
Following is the main application of Theorem \ref{thm-stability}.

\begin{cor} 
\label{cor-torelli}
The above morphism $\phi$ is an isomorphism.
\end{cor}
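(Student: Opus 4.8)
The plan is to deduce the Corollary from the two facts already assembled: that $\phi$ is a surjective local homeomorphism which is generically injective (by \cite{torelli}, conditional on \cite[Conj. 1.12]{torelli}), and that the sheaf underlying each triple is $\tilde{\omega}$-slope-stable for \emph{every} K\"{a}hler class $\omega$ (Theorem \ref{thm-stability}). Since a bijective local homeomorphism of complex manifolds is automatically biholomorphic, it suffices to upgrade generic injectivity to injectivity, i.e. to show that every fibre of $\phi$ is a single point. Equivalently, I would show that the discrete fibre-cardinality function $(X,\eta)\mapsto \#\phi^{-1}(X,\eta)$, which is well defined because $\phi$ is a local homeomorphism and is equal to $1$ on a dense open subset, is in fact globally constant and hence identically $1$.

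The mechanism that forces constancy is the twistor-line structure of $\fM_\Lambda^0$ together with full-K\"{a}hler-cone stability. Each sheaf $E$ underlying a triple in $\tfM_\Lambda^0$ satisfies Assumption \ref{assumption-V} (through \cite[Condition 1.6]{torelli}), so Theorem \ref{thm-stability} applies to it and $E$ is $\tilde{\omega}$-slope-stable for every K\"{a}hler class $\omega$. Combined with the Hodge type $(2,2)$ of $c_2(\SheafEnd(E))$ on every cartesian square \cite[Prop. 1.2]{markman-hodge}, Verbitsky's deformation theorem \cite[Theorem 3.19]{kaledin-verbitski-book} shows that $E$ is $\tilde{\omega}$-hyperholomorphic and therefore extends to a reflexive sheaf over the fibre square $\X\times_{\PP^1_\omega}\X$ of the twistor family associated to \emph{any} choice of $\omega$. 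This is precisely the ``triviality of the stability chamber structure'': the extension over each twistor direction is canonical, with no dependence on a chamber of the K\"{a}hler cone. Consequently every twistor line through a point $(X,\eta)\in\fM_\Lambda^0$ lifts through each prescribed point of $\phi^{-1}(X,\eta)\subset\tfM_\Lambda^0$, and the uniqueness of the hyperholomorphic extension (reflexive extensions of a given hyperholomorphic sheaf being unique) yields unique path-lifting of twistor lines.

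With unique twistor-line lifting in hand, I would promote $\phi$ to a covering map over $\fM_\Lambda^0$. A connected component of the moduli space of marked irreducible holomorphic symplectic manifolds is covered by, and twistor-connected through chains of, twistor lines \cite{huybrects-basic-results}. Reversing a twistor path turns each lift into a bijection $\phi^{-1}(a)\leftrightarrow\phi^{-1}(b)$ for endpoints $a,b$ of a twistor line, so the fibre cardinality of $\phi$ is invariant along each twistor line and hence constant on the entire twistor-connected component $\fM_\Lambda^0$. Since generic injectivity forces this constant to equal $1$, every fibre is a singleton. Thus $\phi$ is bijective, and being a local homeomorphism it is an isomorphism.

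The main obstacle I anticipate is carrying out the covering-map step honestly in the non-Hausdorff category: a local homeomorphism need not be a covering map, and $\fM_\Lambda$ is only a non-Hausdorff manifold, so I must verify path-lifting uniformly along families of twistor lines rather than merely for an isolated line, and check that the twistor lines emanating from an arbitrary point genuinely reach the dense open locus where injectivity is already known. It is exactly here that Theorem \ref{thm-stability} is indispensable: without stability in every K\"{a}hler chamber, only some twistor directions would admit the hyperholomorphic extension, the lifts could branch or fail to connect a given point to the generic injective locus, and the fibre cardinality could jump. Full-K\"{a}hler-cone stability removes every chamber wall and makes the family of liftable twistor directions large enough to connect any point to the injective locus, which is what closes the argument.
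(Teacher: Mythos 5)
Your route is genuinely different from the paper's, and two of its steps are not actually established. First, the transport bijection $\phi^{-1}(a)\leftrightarrow\phi^{-1}(b)$ along a twistor line: points of $\tfM^0_\Lambda$ are equivalence classes of triples, where $(X,\eta,E_1)\sim(X,\eta,E_2)$ means $\SheafEnd(E_1)\cong\SheafEnd(E_2)$ as sheaves of algebras, and $\tfM^0_\Lambda$ is non-Hausdorff. Verbitsky's uniqueness of the reflexive extension of a \emph{fixed} sheaf $E$ does not show that your transport descends to equivalence classes (that would require extending the untwisted algebra $\SheafEnd(E)$ and knowing its extension is unique and compatible with restriction), nor that the endpoint triple $(X_b,\eta_b,E_b)$ is again a point of $\tfM^0_\Lambda$ (it must satisfy \cite[Condition 1.6]{torelli}, rigidity, and stability, and must lie in the same connected component, which needs the lift $t\mapsto(X_t,\eta_t,E_t)$ to be a genuine continuous path in the moduli space). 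These are precisely the ingredients of the construction in \cite{torelli}, so your sketch silently re-derives that machinery. Moreover, injectivity of the transport map --- that inequivalent points of $\phi^{-1}(a)$ cannot transport to the same, or to mutually inseparable, points of $\phi^{-1}(b)$ --- is in the non-Hausdorff setting exactly a statement of the type of \cite[Lemma 5.3]{torelli} (inseparable triples whose sheaves are stable for a \emph{common} K\"{a}hler class have isomorphic $\SheafEnd$'s), which you never supply or cite. Second, the connectivity step: chains-of-twistor-lines connectivity of $\fM^0_\Lambda$ is not in \cite{huybrects-basic-results}, and the available twistor-connectivity statements concern the Hausdorff reduction, so a chain from the injectivity locus only reaches a point \emph{inseparable} from a prescribed one --- which is exactly the ambiguity you must exclude. (This half is repairable: the very general fiber of a single twistor line through any point has trivial Picard group and lies in the injectivity locus; but you flag this verification as an anticipated obstacle and do not carry it out.)

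The paper's proof avoids lifting and connectivity arguments altogether. Given two points $(X,\eta,E_1)$ and $(X,\eta,E_2)$ in one fiber of $\phi$, they are inseparable by \cite[Theorem 6.1]{torelli}. Each $E_i$ satisfies \cite[Condition 1.6]{torelli}, hence Assumption \ref{assumption-V} by \cite[Lemma 7.6]{torelli}, so Theorem \ref{thm-stability} makes each $E_i$ slope-stable with respect to \emph{every} K\"{a}hler class, in particular with respect to one and the same class $\omega$. Then \cite[Lemma 5.3]{torelli} yields that $\SheafEnd(E_1)$ and $\SheafEnd(E_2)$ are isomorphic as sheaves of algebras, i.e.\ the two triples represent the same point of $\tfM^0_\Lambda$. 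Thus every fiber is a single point, and an injective surjective local homeomorphism is an isomorphism. The only role of Theorem \ref{thm-stability} is to produce the common K\"{a}hler class for the members of a fiber; that is the precise sense in which the triviality of the stability chamber structure is used, and it is considerably less than the full twistor apparatus your proposal would need to make rigorous.
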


The Corollary is proven in Section \ref{sec-proof-of-main-theorem} (conditional on \cite[Conj. 1.12]{torelli}). 
Taking the quotient by the monodromy action, Corollary \ref{cor-torelli} may be reformulated as follows.

\begin{cor}
\label{cor-torelli-without-marking}
On the cartesian square $X\times X$ of every manifold $X$ of $K3^{[n]}$-type there exists a {\em canonical} 
pair\footnote{When $n=2$, $E^*$ is isomorphic to $E\otimes \det(E)^*$ and the two sheaves of algebras 
$\SheafEnd(E)$ and $\SheafEnd(E^*)$ are isomorphic.} 
of isomorphism classes of sheaves of algebras $\SheafEnd(E)$ and $\SheafEnd(E^*)$, where $E$ is a reflexive rank $2n-2$ twisted sheaf, locally free away from the diagonal, which satisfies Assumption \ref{assumption-V} and is $\tilde{\omega}$-slope-stable with respect to every K\"{a}hler class $\omega$ on $X$. 
%and $\tilde{\omega}$-Hermite-Einstein stable with respect to some K\"{a}hler class $\omega$ on $X$.
\end{cor}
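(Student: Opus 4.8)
The plan is to deduce the statement from Corollary \ref{cor-torelli} by passing to the quotient by the monodromy group. Let $\Gamma$ denote the monodromy group, regarded as the group of isometries of $\Lambda$ realized by parallel transport; since monodromy operators are orientation preserving, $\Gamma$ acts on the connected component $\fM_\Lambda^0$ by $g\cdot(X,\eta)=(X,g\circ\eta)$, and this action lifts to $\tfM_\Lambda^0$ by $g\cdot(X,\eta,E)=(X,g\circ\eta,E)$, leaving the sheaf $E$ untouched, because $E$ lives on $X\times X$ and does not refer to the chosen marking of $H^2(X,\Integers)$. The forgetful morphism $\phi$ is then $\Gamma$-equivariant, and since it is an isomorphism by Corollary \ref{cor-torelli}, it descends to a bijection $\tfM_\Lambda^0/\Gamma\to\fM_\Lambda^0/\Gamma$. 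The orbit space $\fM_\Lambda^0/\Gamma$ is identified, via the global Torelli theorem and the description of the monodromy group, with the set of isomorphism classes of manifolds $X$ of the given $K3^{[n]}$-type: two marked pairs in $\fM_\Lambda^0$ lie in a common $\Gamma$-orbit precisely when their underlying manifolds are isomorphic. Because the lifted action fixes $E$, the sheaf of algebras $\SheafEnd(E)$ is literally unchanged along each orbit, so the descended bijection attaches to each isomorphism class of $X$ a well-defined isomorphism class of $\SheafEnd(E)$.

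It remains to account for the unordered pair $\{\SheafEnd(E),\SheafEnd(E^*)\}$. The key observation is that the dual $E^*$ is again an admissible sheaf: it is reflexive of rank $2n-2$, locally free away from the diagonal, $f_1^*\theta f_2^*\theta^{-1}$-twisted, infinitesimally rigid, and $\tilde{\omega}$-slope-stable for every K\"{a}hler class $\omega$, since slope-stability passes to duals of reflexive sheaves. Moreover $E^*$ satisfies Assumption \ref{assumption-V}: dualizing the local model of $E$ along the diagonal and using that $\ell^\perp/\ell$ is self-dual, via the symplectic form it inherits from $TX$, shows that $(\beta^*E^*)_{fr}(D)$ again restricts on $D$ to $\ell^\perp/\ell$. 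Consequently the triple $(X,\eta,E^*)$ defines a point of $\tfM_\Lambda$ lying over the same marked pair $(X,\eta)$, hence in a second component that is exchanged with $\tfM_\Lambda^0$ by the duality involution $E\mapsto E^*$; this involution covers the identity on $\fM_\Lambda$. Applying Corollary \ref{cor-torelli} to this second component and taking the $\Gamma$-quotient as above, the fiber of the forgetful map over each isomorphism class of $X$ is exactly the unordered pair $\{\SheafEnd(E),\SheafEnd(E^*)\}$, each member being well defined by the monodromy-invariance established in the first paragraph. When $n=2$ the isomorphism $E^*\cong E\otimes\det(E)^*$ of the footnote gives $\SheafEnd(E)\cong\SheafEnd(E^*)$, so the two components coincide and the pair collapses to a single class, in agreement with the statement.

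The main obstacle I expect is the verification that the fiber is exactly this pair and no larger, that is, that $\tfM_\Lambda^0$ and its dual component exhaust the components of $\tfM_\Lambda$ lying over $\fM_\Lambda^0$. This relies on the classification of the components of $\tfM_\Lambda$ carried out in \cite{torelli}, together with the present removal of the stability-chamber ambiguity provided by Theorem \ref{thm-stability}, which is what makes the forgetful map an isomorphism on each component. The remaining points, that $E^*$ inherits $\tilde{\omega}$-slope-stability and Assumption \ref{assumption-V}, are routine once the self-duality of $\ell^\perp/\ell$ and the stability of reflexive duals are in place.
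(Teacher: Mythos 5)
Your argument breaks down at its very first step, and the error is precisely the point of the corollary. You claim that the monodromy action on $\fM_\Lambda^0$ lifts to $\tfM_\Lambda^0$ by $g\cdot(X,\eta,E)=(X,g\eta,E)$, ``leaving the sheaf $E$ untouched,'' and that $\phi$ is equivariant for this lift. The naive action $(X,\eta,E)\mapsto(X,g\eta,E)$ is indeed well defined on the full moduli space $\tfM_\Lambda$, but it does \emph{not} preserve the connected component $\tfM_\Lambda^0$: when $cov(g)=-1$ (where $cov:Mon(\Lambda)\to\{\pm1\}$ records the action of a monodromy element on the discriminant group $\Lambda^*/\Lambda\cong\Integers/(2n-2)\Integers$), the point $(X,g\eta,E)$ lies in a different component. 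The action one must use is the conjugation $\phi^{-1}\circ g\circ\phi$ of the $Mon(\Lambda)$-action by the isomorphism $\phi$ of Corollary \ref{cor-torelli}, and by the proof of Theorem 1.11 of \cite{torelli} this conjugated action is $g\cdot(X,\eta,E)=(X,g\eta,E^{cov(g)})$, with $E^{cov(g)}=E^*$ when $cov(g)=-1$. In other words, parallel transport of the triple along a loop realizing $g$ carries $E$ to $E^{cov(g)}$; the sheaf is not topologically inert relative to the marking, contrary to your heuristic.

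This is not a repairable detail of your write-up but the heart of the matter: if your lift were correct, you would have proved the stronger statement that the single isomorphism class $\SheafEnd(E)$ is canonical, which is false for $n>2$. The unordered pair $\{\SheafEnd(E),\SheafEnd(E^*)\}$ arises not, as in your second paragraph, from a second ``dual'' component of $\tfM_\Lambda$ lying over $\fM_\Lambda^0$ (the paper never needs to classify components or invoke a duality involution between them), but from the twisting of the sheaf by $cov$ \emph{within the single component} $\tfM_\Lambda^0$: the fiber of $\phi$ over the $Mon(\Lambda)$-orbit of $(X,\eta_0)$ is a single $Mon(\Lambda)$-orbit of triples, say of $(X,\eta_0,E_0)$, and along that orbit the sheaf of algebras alternates between $\SheafEnd(E_0)$ and $\SheafEnd(E_0^*)$ according to $cov(g)$. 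Consequently neither member of the pair is individually monodromy-invariant (your claim that ``each member being well defined'' follows from your first paragraph is exactly what fails); only the pair is canonical, collapsing to a single class when $n=2$ via $E^*\cong E\otimes\det(E)^*$. Your subsidiary observations (stability of $E^*$, self-duality of $\ell^\perp/\ell$) are fine but do not address this obstruction.
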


The Corollary is proven in Section \ref{sec-proof-of-main-theorem} (conditional on \cite[Conj. 1.12]{torelli}). 
The transposition $\tau:X\times X\rightarrow X\times X$ of the two factors pulls back $\SheafEnd(E)$ to $\SheafEnd(E^*)$ \cite[Lemma 4.3]{generalized-deformations}.
The Corollary thus implies that the construction of \cite{generalized-deformations}, associating to a triple $(X,\eta,E)$ in $\tfM_\Lambda^0$ a
pretriangulated $K3$-category, results in a single equivalence class of such a category for each isomorphism class of a manifold $X$ of $K3^{[n]}$-type with non-maximal Picard rank. 

%****************************************************************
%
%****************************************************************
\section{Restriction to the blown-up diagonal}

Let $E$ be the sheaf in Theorem \ref{thm-stability} and 
let $F$ be a saturated subsheaf of  $E$ of rank in the range $0<\rank(F)<2n-2$. 
Let $\tilde{G}$ be the image of the composition $(\beta^*F)(D)\rightarrow (\beta^*E)(D)\rightarrow V$. 
Denote by $G$ the saturation of $\tilde{G}$ as a subsheaf of $V$. The sheaf $G$ is  reflexive, since $V$ is locally free. 
The restriction of $G$ to $D$ maps to the restriction  $\ell^\perp/\ell$ of $V$ and we denote by $G'$ the saturation of its image as a subsheaf of $\ell^\perp/\ell$.
We provide in this section an upper bound for $\deg_{\tilde{\omega}}(\SheafHom(E,F))$ in terms of the first Chern class of $G'$
(Lemma \ref{lemma-degree-of-Hom-E-F-in-terms-of-G-prime}). 
We then provide information on  $c_1(G')$ (Lemma \ref{lemma-c-1-G-prime-not-multiple-of-h}).

\begin{lem}
\label{lemma-c-1-of-Hom-is-push-forward-of-c-1-of-Hom}
The equality $c_1(\SheafHom(E,F))=\beta_*(c_1(\SheafHom(V,G)))$ holds. 
\end{lem}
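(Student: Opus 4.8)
The plan is to verify the identity after restricting both sides to the complement of the diagonal, where $\beta$ is an isomorphism, and then to invoke the two basic properties of the blow-down: that $\beta_*\beta^*=\mathrm{id}$ on $A^1(X\times X)$ by the projection formula (since $\beta$ is proper and birational), and that $\beta_*[D]=0$. The latter holds because $D$ is a $\PP^{2n-1}$-bundle over $\Delta\cong X$, so $\dim D=4n-1>2n=\dim\Delta$ and the class of $D$ pushes forward to a class of the wrong dimension, hence to $0$.

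First I would identify the restrictions of $V$ and $G$ to $Y\setminus D$. Over $X\times X\setminus \Delta$ the sheaf $E$ is locally free and $\beta$ restricts to an isomorphism onto this open set; moreover $D$ is disjoint from $Y\setminus D$, so the twist by $\StructureSheaf{Y}(D)$ is trivial there and no torsion is introduced. Hence $V=(\beta^*E)_{fr}(D)$ restricts over $Y\setminus D$ to $\beta^*E$. Since $\beta$ is an isomorphism over $Y\setminus D$, its pullback is exact there, so $\tilde{G}$ restricts to $\beta^*F$; and because $F$ is saturated in $E$, the restriction of $F$ to $X\times X\setminus\Delta$ is saturated in $E$, whence $\beta^*F$ is already saturated in $V|_{Y\setminus D}$ and therefore equals $G|_{Y\setminus D}$. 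Consequently $\SheafHom(V,G)$ and $\beta^*\SheafHom(E,F)$ are isomorphic over $Y\setminus D$.

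It then follows that the line bundles $\det\SheafHom(V,G)$ and $\beta^*\det\SheafHom(E,F)$ agree on $Y\setminus D$, so their first Chern classes differ by a class supported on the irreducible divisor $D$; by the localization sequence this difference is an integral multiple $m[D]$. Applying $\beta_*$ and using the two properties above,
\[
\beta_*(c_1(\SheafHom(V,G)))=\beta_*(\beta^*c_1(\SheafHom(E,F))+m[D])=c_1(\SheafHom(E,F))+m\,\beta_*[D]=c_1(\SheafHom(E,F)),
\]
which is the claimed equality.

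The one step requiring genuine care --- and the main obstacle --- is the identification $G|_{Y\setminus D}\cong\beta^*F$ with the \emph{correct} first Chern class. Here it is essential that $F$ is saturated in $E$, so that passing from $\tilde{G}$ to its saturation $G$ does not alter the sheaf, and hence $c_1$, away from $D$; without saturation of $F$ the operation of saturating $\tilde{G}$ could introduce a divisorial correction and spoil the comparison. Everything else is a formal consequence of $\beta$ being an isomorphism over $X\times X\setminus\Delta$ together with the vanishing $\beta_*[D]=0$, the latter of which is precisely what makes the $[D]$-coefficient $m$ irrelevant.
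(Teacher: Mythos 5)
Your proof is correct, but it takes a genuinely different route from the paper's. You argue \emph{upstairs} on $Y$: after identifying $\SheafHom(V,G)$ with $\beta^*\SheafHom(E,F)$ away from $D$ (which, as you rightly stress, uses that $F$ is saturated in $E$, so that saturating $\tilde{G}$ changes nothing off $D$), you invoke localization to write $c_1(\SheafHom(V,G))-\beta^*c_1(\SheafHom(E,F))=m[D]$, and then push forward using $\beta_*\beta^*=\mathrm{id}$ and $\beta_*[D]=0$. The paper argues \emph{downstairs}: it first shows $c_1(\SheafHom(E,F))=c_1(R\beta_*\SheafHom(V,G))$ --- because the higher direct images $R^i\beta_*\SheafHom(V,G)$, $i>0$, are supported on the diagonal, which has codimension $2n\geq 2$, and because $\beta_*\SheafHom(V,G)$ coincides with $\SheafHom(E,F)$ off the diagonal --- and then applies Grothendieck--Riemann--Roch for complex manifolds (O'Brian--Toledo--Tong) together with the Todd class computation $td_\beta=1+\frac{1-2n}{2}[D]+\dots$, finishing, as you do, with $\beta_*[D]=0$. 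Your route is the more elementary one: it avoids GRR in the analytic category and the Todd-class computation altogether, at the cost of justifying that the kernel of the restriction map $H^2(Y,\Integers)\rightarrow H^2(Y\setminus D,\Integers)$ is generated by $[D]$; this does hold, since $D$ is an irreducible divisor in the smooth compact manifold $Y$, so that the kernel is the image of $H^2_D(Y,\Integers)\cong \Integers[D]$. One small adjustment: since $X$ is only assumed compact K\"{a}hler (not projective), you should phrase the projection formula and localization in integral cohomology with Gysin pushforwards rather than in Chow groups $A^1$; the argument transfers verbatim. Note finally that the saturation subtlety you flag is equally present, though left tacit, in the paper's step ``the two sheaves coincide away from the diagonal.''
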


\begin{proof}
The higher direct image $R^i\beta_*(\SheafHom(V,G))$, $i>0$, is supported on the diagonal and so its first Chern class vanishes.
%$c_1(R^i\beta_*(\SheafHom(V,G)))=0$, for $i>0$.
The equality $c_1(\beta_*\SheafHom(V,G))=c_1(R\beta_*\SheafHom(V,G))$ follows. 
Similarly, $c_1(\beta_*\SheafHom(V,G))=c_1(\SheafHom(E,F))$, as the two sheaves coincide away from the diagonal.
Hence, $c_1(\SheafHom(E,F))$ is the graded summand in degree $2$ of
$\beta_*(ch(\SheafHom(V,G))td_\beta)$, by Grothendieck-Riemann-Roch \cite{grr-complex}. 
We have $c_1(TY)=(1-2n)[D]$ and so 
\[
td_\beta=td(TY)/td(T[X\times X])=1+\frac{1-2n}{2}[D]+\dots
\]
The statement follows, by the vanishing
of $\beta_*([D])$.
\end{proof}

Set $h:=c_1(\ell^{-1})\in H^2(D,\Integers)$.

\begin{lem}
\label{lemma-powers-of-h}
$
p_*(h^i)=\left\{
\begin{array}{ccl}
0 & \mbox{if} & i<2n-1 \ \mbox{or} \ i \ \mbox{is even},
\\
1 &  \mbox{if} & i=2n-1,
\end{array}
\right.
$
\\
and 
\begin{equation}
\label{eq-recursive-formula}
p_*(h^{2n+k})=-c_{k+1}(TX)-\sum_{\begin{array}{c}j=2\\ j\ \mbox{even}\end{array}}^{k-1}c_j(TX)p_*(h^{2n+k-j}), 
\end{equation}
for any positive odd integer $k$.
\end{lem}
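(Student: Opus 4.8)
The plan is to recognize $D$ as the projective bundle $\PP(TX)$ of lines in the tangent bundle, with $p:D\to X$ the bundle projection, and to identify $h=c_1(\ell^{-1})$ with the first Chern class of the relative hyperplane bundle $\StructureSheaf{D}(1)$, since $\ell\subset p^*TX$ is the tautological subbundle $\StructureSheaf{D}(-1)$. The fibers of $p$ are projective spaces of dimension $2n-1$, so the standard projective-bundle pushforward formula gives $p_*(h^i)=s_{i-2n+1}(TX)$, where $s_j(TX)$ denotes the $j$-th Segre class of $TX$, characterized by the identity $c(TX)s(TX)=1$ for the total Segre class $s(TX)=\sum_j s_j(TX)$. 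In particular $p_*(h^i)=0$ for $i<2n-1$ because the Segre class of negative index vanishes, and $p_*(h^{2n-1})=s_0(TX)=1$, which establishes the first line of the Lemma except for the parity claim.

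The key geometric input is that $X$ carries a non-degenerate holomorphic two-form, which induces an isomorphism $TX\cong\Omega^1_X$ of the tangent bundle with the cotangent bundle. Consequently $c_j(TX)=c_j(\Omega^1_X)=(-1)^j c_j(TX)$, so that all odd Chern classes $c_j(TX)$ vanish. Since the total Chern class $c(TX)$ then involves only even-degree terms, its inverse $s(TX)$ does as well, and therefore $s_j(TX)=0$ for every odd $j$. Combined with the pushforward formula this yields $p_*(h^i)=s_{i-2n+1}(TX)=0$ whenever $i$ is even, since $i-2n+1$ is then odd; this completes the first displayed case.

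For the recursion I would extract the coefficient of total degree $k+1$ in the identity $c(TX)s(TX)=1$. For $k\geq 1$ this gives $\sum_{j=0}^{k+1}c_j(TX)s_{k+1-j}(TX)=0$, and after discarding the terms with odd $j$ (which vanish) and solving for $s_{k+1}(TX)$ one obtains $s_{k+1}(TX)=-\sum_{\substack{2\leq j\leq k+1\\ j\ \mbox{even}}}c_j(TX)s_{k+1-j}(TX)$. Translating each Segre class back through $s_{k+1-j}(TX)=p_*(h^{2n+k-j})$, isolating the top term $j=k+1$ (for which $s_0(TX)=1$ contributes $-c_{k+1}(TX)$, using that $k+1$ is even when $k$ is odd), and letting $j$ range over the remaining even integers from $2$ to $k-1$ produces exactly Equation (\ref{eq-recursive-formula}).

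The only point demanding care is the normalization of conventions: the sign of the tautological relation, the precise degree shift in the pushforward formula, and the matching between Segre indices and powers of $h$. Once the convention $\ell=\StructureSheaf{D}(-1)$ is fixed so that $h$ is a relative hyperplane class, all of these are forced, and the argument reduces to the two standard facts recalled above together with the vanishing of the odd Chern classes of $TX$.
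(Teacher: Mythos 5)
Your proof is correct and is essentially the paper's argument in different packaging: the paper derives the recursion by hand from the vanishing of $c_{2n}(q)$ for the quotient $q=p^*TX/\ell$ (the Grothendieck relation) together with the projection formula, which is exactly the content of your Segre-class identities $p_*(h^i)=s_{i-2n+1}(TX)$ and $s(TX)=c(TX)^{-1}$. Both arguments then hinge on the vanishing of the odd Chern classes of $TX$ — a point you justify explicitly via the symplectic isomorphism $TX\cong \Omega^1_X$, and which the paper uses implicitly.
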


\begin{proof}
The statement is well known. 
The vanishing of $p_*(h^i)$, for $i<2n-1$, follows for dimension reasons.
The equality $p_*(h^{2n-1})=1$ is proven in \cite[Appendix B.4 Lemma 9]{fulton}.
Consider the short exact sequence
$
0\rightarrow \ell\rightarrow p^*TX\rightarrow q\rightarrow 0
$
defining $q$. Then $c_{2n}(q)=0$, since $\rank(q)=2n-1$. On the other hand, the Chern polynomial of $q$ satisfies
\[
c_t(q)=p^*c_t(TX)/c_t(\ell)=p^*c_t(TX)(1+h+h^2 +\cdots + h^{4n-1}).
\]
Consequently,
$
0=c_{2n}(q)=\sum_{j=0}^{2n}p^*c_j(TX)h^{2n-j}
$ and
\[
h^{2n}=-\left[
\sum_{j=1}^{2n}p^*c_j(TX)h^{2n-j}
\right].
\]
The projection formula yields
\[
p_*(h^{2n+k})=
-p_*\left(\sum_{j=1}^{2n}p^*c_j(TX)h^{2n+k-j}\right)=
-c_{k+1}(TX)-
\sum_{j=1}^{k}c_j(TX)p_*(h^{2n+k-j}),
\]
for every positive integer $k$.
The vanishing of $p_*(h^i)$ for even $i$ follows, by induction, from the vanishing of $c_j(TX)$ for odd $j$.
The recursive formula (\ref{eq-recursive-formula}) follows. 
\end{proof}

\begin{lem}
\label{lemma-sym-TX}
The $t$-th symmetric power  $\Sym^{t}(TX)$ of the tangent bundle is $\omega$-slope-stable with respect to every K\"{a}hler class $\omega$ on $X$, for all $t\geq 0$.
The space $H^0(X,(\Wedge{j}TX)\otimes \Sym^t(T^*X))$ vanishes, for all $j\geq 0$ and all $t>1$.
\end{lem}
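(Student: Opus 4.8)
The plan is to prove both statements simultaneously by exploiting the fact that $X$ is an irreducible holomorphic symplectic manifold, so that the holomorphic symplectic form $\sigma\in H^0(X,\Omega^2_X)$ induces an isomorphism $TX\cong T^*X$. Consequently $\Sym^t(TX)\cong\Sym^t(T^*X)$, and the two assertions become closely linked. For the stability statement, I would argue that $TX$ itself is $\omega$-slope-stable for every K\"{a}hler class $\omega$: since $X$ is irreducible holomorphic symplectic it has $H^0(X,\Omega^1_X)=0$ and holonomy group $Sp(n)$, so $TX$ is an irreducible holonomy representation and hence $\omega$-slope-stable (indeed $\omega$-slope-polystable-stability follows from irreducibility of the holonomy representation, by the standard correspondence between Hermitian-Einstein metrics and polystability). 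The key point is that the holonomy representation $Sp(n)$ acts irreducibly on the standard representation $\ComplexNumbers^{2n}$, which gives stability rather than merely polystability. Moreover, since $c_1(X)=0$, the K\"{a}hler-Einstein (here Ricci-flat, by Yau) metric on $X$ induces a Hermitian-Yang-Mills metric on $TX$ with respect to every $\omega$, so $TX$ is $\omega$-slope-polystable, and the irreducibility of holonomy upgrades this to stability.

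Granting that $TX$ carries an $\omega$-Hermitian-Einstein metric for every $\omega$, the first claim follows from general theory: the symmetric power $\Sym^t(TX)$ inherits an induced Hermitian-Einstein metric, hence is $\omega$-slope-polystable, and its stability (rather than mere polystability) follows because $\Sym^t$ of the standard representation of $Sp(n)$ remains irreducible. I would cite the standard results that (i) a holomorphic vector bundle admitting a Hermitian-Einstein metric is slope-polystable (Kobayashi), (ii) tensor and symmetric-power constructions preserve the Hermitian-Einstein property (with the slope being additive/multiplicative in the expected way since $\deg_\omega(TX)=0$ as $c_1(X)=0$), and (iii) the $Sp(n)$-representation $\Sym^t(\ComplexNumbers^{2n})$ is irreducible. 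Because $\deg_\omega(TX)=0$, all these bundles have vanishing $\omega$-slope, which keeps the bookkeeping clean.

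For the vanishing statement, I would use the representation-theoretic decomposition of $(\Wedge{j}TX)\otimes\Sym^t(T^*X)$ under the holonomy group $Sp(n)$ combined with the fact that a global holomorphic section of a Hermitian-Einstein bundle of slope $0$ must be parallel (covariantly constant), by a Bochner-type / Weitzenb\"{o}ck argument on the Ricci-flat K\"{a}hler manifold. Since $\deg_\omega$ of every summand is $0$, any nonzero global section is parallel and hence corresponds to an $Sp(n)$-invariant vector in the fiber representation. Thus the claim reduces to showing that the $Sp(n)$-representation $(\Wedge{j}\ComplexNumbers^{2n})\otimes\Sym^t((\ComplexNumbers^{2n})^*)$ has no nonzero invariants for $t>1$. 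This is a finite-dimensional invariant-theory computation: $Sp(n)$-invariants in such a tensor product are built from the symplectic form, and one checks that the symmetric power $\Sym^t$ for $t\geq 2$ cannot be saturated by invariants coming from contractions with $\Wedge{j}$, so no invariants survive.

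The main obstacle I anticipate is the vanishing statement, specifically the invariant-theory input: one must carefully enumerate the $Sp(n)$-invariants in $(\Wedge{j}V)\otimes\Sym^t(V^*)$ for the standard symplectic representation $V=\ComplexNumbers^{2n}$ and verify they vanish for all $t>1$ and all $j$. The transition from ``global holomorphic section'' to ``parallel section'' to ``invariant vector'' is standard once one has the Ricci-flat metric, but it requires justifying that a global holomorphic (hence, on a compact Ricci-flat K\"{a}hler manifold, harmonic) section of a Hermitian-Einstein bundle of slope zero is parallel; I expect to quote this from the theory of Hermitian-Einstein bundles. The representation-theoretic exclusion of invariants for $t\geq 2$ is where the genuine content lies, and I would organize it by noting that the only $Sp(n)$-invariant linear functionals available contract $V$ with $V^*$ using the symplectic pairing, and a dimension/weight count shows these cannot produce a nonzero invariant in $\Sym^t(V^*)$ once $t\geq 2$.
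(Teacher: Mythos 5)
Your proposal is correct and follows essentially the same route as the paper: Yau's Ricci-flat metrics give Hermite--Einstein structures on $TX$ for every K\"{a}hler class, Beauville's theorem identifies the holonomy as $Sp(n)$, and the representation theory of $Sp(n)$ (irreducibility of $\Sym^t$ of the standard representation, and its non-occurrence in exterior powers) yields both the stability and the vanishing. The only cosmetic difference is that you derive the vanishing via parallel sections and the absence of $Sp(n)$-invariants in $(\Wedge{j}V)\otimes\Sym^t(V^*)$, whereas the paper derives it from the non-occurrence of $\Sym^t(U)$ as a subrepresentation of $\Wedge{j}U$; these are equivalent by Schur's lemma and self-duality of $Sp(n)$-representations.
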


\begin{proof}
The vector bundle $TX$ admits a 
Hermite-Einstein metric whose $(1,1)$-form represents  $\omega$, for every K\"{a}hler class $\omega$ on $X$, by Yau's proof of the Calabi Conjecture 
\cite[Cor. 4.B.22]{huybrects-complex-geometry-book}. In particular, $TX$ is $\omega$-slope-polystable with respect to every K\"{a}hler class $\omega$.
The holonomy group of the tangent bundle of an irreducible holomorphic symplectic manifold of complex dimension $2n$ is the symplectic group $Sp(n)$ \cite[Prop. 4]{beauville-varieties-with-zero-c-1}. Consequently, the vector bundle $TX$ is slope-stable with respect to every K\"{a}hler class, its tensor powers are poly-stable, and the indecomposable direct summands of the tensor powers correspond to irreducible representations of $Sp(n)$. Let $U$ be the standard representation of $Sp(n)$. 
The symmetric powers $\Sym^t(U)$ are irreducible representations, for all $t\geq 0$. If $t>1$ then $\Sym^t(U)$ does not appear as a subrepresentation of the exterior product $\Wedge{j}U$, for any $j\geq 0$. Hence, 
$
H^0\left(X,\left(\Wedge{j}TX\right)\otimes \Sym^t(T^*X)\right)
$
vanishes, for $t>1$. 
\end{proof}

\begin{lem}
\label{lemma-effective-divisor-in-D}
Let $Z$ be a non-zero effective divisor on $D$ and $[Z]\in H^2(D,\Integers)$ its class. Then 
\[
\int_D p^*(\omega)^{2n-1}h^{2n-1}[Z]> 0,
\]
for every K\"{a}hler class $\omega$ on $X$.
\end{lem}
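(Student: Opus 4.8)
The plan is to show positivity of the integral $\int_D p^*(\omega)^{2n-1}h^{2n-1}[Z]$ by reducing the cohomological computation to an honest intersection-theoretic statement on the divisor $D\cong \PP(TX)$. Since $Z$ is a non-zero effective divisor, we may write $[Z]=p^*a + b\,h$ for some classes $a\in H^2(X,\Integers)$ and some integer $b$, using the projective bundle formula $H^*(D)=H^*(X)[h]/(\text{the Chern relation})$ together with the fact that $H^2(D)=p^*H^2(X)\oplus \Integers h$. First I would substitute this decomposition and expand the integral into two pieces, using the projection formula $\int_D p^*(\,\cdot\,)\,h^{2n-1}=\int_X(\,\cdot\,)$ which is exactly the content of Lemma \ref{lemma-powers-of-h} (the normalization $p_*(h^{2n-1})=1$).

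The two resulting terms are $\int_X \omega^{2n-1}\,a$ coming from the $p^*a$ part, and $b\cdot p_*(h^{2n})$ integrated against $\omega^{2n-1}$ coming from the $b\,h$ part; by Lemma \ref{lemma-powers-of-h} the latter involves $p_*(h^{2n})$, which reduces to $-c_1(TX)=0$ since $c_1(TX)=0$ for an irreducible holomorphic symplectic manifold. Hence the $b\,h$ term contributes $\int_X \omega^{2n-1}\cdot(-c_1(TX))=0$, and the entire integral equals $\int_X \omega^{2n-1}\,a=\deg_\omega(p_*[Z])$, where $p_*[Z]=a$. So the problem is reduced to proving that the pushforward class $a=p_*[Z]$ is a non-zero \emph{pseudo-effective} (in fact nef-positive) $(1,1)$-class on $X$, and then invoking that $\omega^{2n-1}$ pairs strictly positively with any non-zero such class.

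I expect the main obstacle to be controlling the sign and non-vanishing of $a=p_*[Z]$ rather than the routine expansion. The key geometric input is that $p:D\to X$ is a surjective fibration with fibers $\PP^{2n-1}$, so the pushforward of an effective divisor is effective; but I must rule out the degenerate case $a=0$, i.e. $Z$ being a linear combination of fiber classes only (the component $p^*a$ vanishing). Here the correct way to proceed is: an effective divisor whose class is a positive multiple of the relative hyperplane class $h$ restricts positively to fibers, whereas $\int_D p^*(\omega)^{2n-1}h^{2n-1}\cdot h=\int_X\omega^{2n-1}p_*(h^{2n})=0$ by the above, and an effective divisor cannot have zero intersection with the movable/positive test class $p^*(\omega)^{2n-1}h^{2n-1}$ unless it is contracted by $p$. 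The precise argument is that $p^*(\omega)^{2n-1}h^{2n-2}$ is represented by a closed positive $(2n-1,2n-1)$-current that is strictly positive along each fiber, so its wedge with the current of integration over any non-zero effective $Z$ is a non-zero positive measure, forcing the integral to be strictly positive.

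Concretely I would carry this out as follows: fix a Hermite–Einstein (hence positive-curvature along fibers) metric on the line bundle $\ell^{-1}$, so that $h$ is represented by a closed real $(1,1)$-form $\alpha$ whose restriction to every fiber of $p$ is a Kähler form (the Fubini–Study form of $\PP(TX)$), and fix a Kähler form $\omega$ representing the Kähler class. Then $p^*\omega^{2n-1}\wedge \alpha^{2n-1}$ is a smooth semipositive $(2n-1,2n-1)$-form which is strictly positive when restricted to the smooth locus of $Z$ in the fiber directions; integrating it against the current $[Z]$ of integration gives $\int_Z p^*\omega^{2n-1}\wedge\alpha^{2n-1}\ge 0$, with strict inequality because $Z$, being an effective divisor, dominates $X$ under $p$ (its image has full dimension $2n$) or else is a union of fibers — and the union-of-fibers case is excluded since a fiber has class a multiple of $h^{2n-2}$-dual and pairs to zero with $h^{2n-1}p^*\omega^{2n-1}$ for dimension reasons, contradicting $Z\neq 0$ only if $Z$ genuinely meets the generic fiber in codimension one. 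This dichotomy, handled via the projection formula, yields the strict positivity and completes the proof.
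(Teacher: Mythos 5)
Your cohomological reduction is the same as the paper's first step and is fine: writing $[Z]=p^*a+bh$ (with $b\geq 0$, since the restriction of $\StructureSheaf{D}(Z)$ to each fiber of $p$ must be effective), the identities $p_*(h^{2n-1})=1$ and $p_*(h^{2n})=0$ of Lemma \ref{lemma-powers-of-h} give
\[
\int_D p^*(\omega)^{2n-1}h^{2n-1}[Z]=\int_X\omega^{2n-1}a .
\]
(A small slip: $a$ equals $p_*\left(h^{2n-1}[Z]\right)$, not $p_*[Z]$, which vanishes for degree reasons.) The genuine gap is in your positivity step. You propose to represent $h=c_1(\ell^{-1})$ by a form $\alpha$ so that $p^*\omega^{2n-1}\wedge\alpha^{2n-1}$ is semipositive and strictly positive along the fibers, and to conclude strict positivity of its integral over the effective divisor $Z$. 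No such representative exists: a semipositive representative of $c_1(\ell^{-1})$ would make $\ell^{-1}$ nef, i.e.\ would make $TX\cong T^*X$ a nef vector bundle, which fails for irreducible holomorphic symplectic manifolds; any natural metric (Hermite--Einstein gives no pointwise positivity) is Fubini--Study-positive along fibers but has negative horizontal directions, so the integrand over $Z$ can be negative and effectivity alone yields no sign. Moreover, no ``soft'' argument of this kind can work, because your proof uses nothing about $X$ beyond $c_1(TX)=0$, and in that generality the statement is false: take $X$ a compact complex torus of dimension $2n$, so $D=\PP(TX)=X\times\PP^{2n-1}$ and all the pushforward formulas you invoke still hold, and take $Z=X\times H$ for a hyperplane $H\subset\PP^{2n-1}$. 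Then $Z$ is a nonzero effective divisor which dominates $X$, $[Z]=h$, and
\[
\int_D p^*(\omega)^{2n-1}h^{2n-1}[Z]=\int_X\omega^{2n-1}p_*(h^{2n})=0 ,
\]
contradicting both your pointwise-positivity claim and your ``domination implies strict positivity'' dichotomy. (Your dichotomy is also misstated: fibers of $p$ have codimension $2n-1\geq 3$, so no divisor is a union of fibers; the vertical case is $Z=p^{-1}(W)$ for a divisor $W\subset X$, i.e.\ $b=0$.)

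What is missing is the actual hyperk\"{a}hler input, and it is the heart of the paper's proof. Effectivity of $Z$ is converted into algebra: $\StructureSheaf{D}(Z)\cong\ell^{-b}\otimes p^*L$, so $0\neq H^0(D,\StructureSheaf{D}(Z))\cong H^0(X,L\otimes\Sym^{b}T^*X)$, i.e.\ $L^{-1}$ embeds as a subsheaf of $\Sym^{b}T^*X$. The bundle $\Sym^{b}T^*X$ is $\omega$-slope-stable with trivial determinant for every K\"{a}hler class $\omega$ (Lemma \ref{lemma-sym-TX}, which rests on Yau's theorem and on the $Sp(n)$ holonomy making $\Sym^{b}$ of the standard representation irreducible). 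Stability forces $\deg_\omega(L)>0$ when $b>0$; when $b=0$, $L^{-1}$ is the ideal sheaf of the nonzero effective divisor $p(Z)$, so again $\deg_\omega(L)>0$. Since $a=c_1(L)$, this gives the strict inequality. Note that this is exactly the step that excludes the torus counterexample above ($\Sym^{b}T^*X$ is there a trivial bundle, as far from stable as possible). Your proposal contains no substitute for this stability input, so the proof does not go through.
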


\begin{proof}
We have a direct sum decomposition $\Pic(D)=p^*\Pic(X)\oplus \Integers \ell$. 
The restriction of $\StructureSheaf{D}(Z)$ to each fiber of $p$ is effective. 
Hence, 
$\StructureSheaf{D}(Z)$ is isomorphic to $\ell^{-a}\otimes p^*L$, for some line bundle $L\in \Pic(X)$ and for some nonnegative integer $a$.
The space $H^0(D,\StructureSheaf{D}(Z))$ does not vanish and is isomorphic to
$H^0(X,L\otimes \Sym^{a}T^*X)$. Hence, $L^{-1}$ is a subsheaf of $\Sym^{a}T^*X$.
The vector bundle $\Sym^{a}T^*X$ is an $\omega$-slope-stable bundle with a trivial determinant, by Lemma 
\ref{lemma-sym-TX}. 
If $a>0$, then $L^{-1}$ is a subsheaf of lower rank. If $a=0$, then $L^{-1}$ is the ideal sheaf of a non-zero effective divisor on $X$, since $Z$ was assumed to be such. 
In both cases we get the inequality $\deg_\omega(L)>0$ and so  
\[
\int_D p^*(\omega)^{2n-1}h^{2n-1}[Z]=\int_X\omega^{2n-1}p_*(h^{2n-1}(ah+p^*c_1(L)))=
\int_X\omega^{2n-1}c_1(L)=\deg_\omega(L)>0,
\]
where the second equality is due to the vanishing of $p_*(h^{2n})$ and the equality $p_*(h^{2n-1})=1$ 
of Lemma \ref{lemma-powers-of-h}.
\end{proof}

Recall that the restriction of $V$ to $D$ is isomorphic to $\ell^\perp/\ell$, by Assumption \ref{assumption-V}.
Let $G'$ be the saturation of the image in $\ell^\perp/\ell$ of the restriction to $D$ of the subsheaf $G$ of $V$.
Note that the sheaves $\ell^\perp/\ell$ and $G'$ are untwisted.

\begin{lem}
\label{lemma-degree-of-Hom-E-F-in-terms-of-G-prime}
The following inequality holds for every K\"{a}hler class $\omega$ on $X$.
\[
\deg_{\tilde{\omega}}(\SheafHom(E,F))\leq
(2n-2)\Choose{4n-1}{2n}\left(\int_X\omega^{2n}\right)\int_D (p^*\omega)^{2n-1} h^{2n-1}c_1(G').
\]
\end{lem}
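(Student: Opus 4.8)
The plan is to transfer the computation of $\deg_{\tilde{\omega}}(\SheafHom(E,F))$ from $X\times X$ onto the exceptional divisor $D$, where the relation between $G$ and $G'$ becomes accessible, and to generate the asserted inequality from the positivity supplied by Lemma \ref{lemma-effective-divisor-in-D}. First I would reduce the power $\tilde{\omega}^{4n-1}$ to its two middle terms. Since $\tilde{\omega}=f_1^*\omega+f_2^*\omega$ and $\omega^{2n+1}=0$ on the $2n$-dimensional manifold $X$, the binomial expansion of $\tilde{\omega}^{4n-1}$ retains only $(f_1^*\omega)^{2n}(f_2^*\omega)^{2n-1}$ and $(f_1^*\omega)^{2n-1}(f_2^*\omega)^{2n}$, each with coefficient $\Choose{4n-1}{2n}$. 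As $X$ is simply connected, $H^1(X,\Integers)=0$, so the K\"{u}nneth decomposition gives $c_1(\SheafHom(E,F))=f_1^*c'+f_2^*c''$ with $c'+c''=\delta^*c_1(\SheafHom(E,F))$. Integrating the two surviving terms against this class yields
\[
\deg_{\tilde{\omega}}(\SheafHom(E,F))=\Choose{4n-1}{2n}\left(\int_X\omega^{2n}\right)\int_X\omega^{2n-1}\delta^*c_1(\SheafHom(E,F)).
\]

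Next I would lift the remaining integral over $X$ to $D$. Since $p_*(h^{2n-1})=1$ by Lemma \ref{lemma-powers-of-h}, the projection formula together with $\beta\circ\iota=\delta\circ p$ rewrites it as $\int_D(p^*\omega)^{2n-1}h^{2n-1}\,\iota^*\beta^*c_1(\SheafHom(E,F))$. I then replace $\beta^*c_1(\SheafHom(E,F))$ by $c_1(\SheafHom(V,G))$: by Lemma \ref{lemma-c-1-of-Hom-is-push-forward-of-c-1-of-Hom} these two classes differ by a multiple of $[D]$ in the blow-up decomposition $H^2(Y,\Integers)=\beta^*H^2(X\times X,\Integers)\oplus\Integers[D]$, and since $\iota^*[D]=c_1(\ell)=-h$ the discrepancy contributes a multiple of $\int_D(p^*\omega)^{2n-1}h^{2n}$, which vanishes because $p_*(h^{2n})=0$ by Lemma \ref{lemma-powers-of-h}. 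Thus the integral equals $\int_D(p^*\omega)^{2n-1}h^{2n-1}\iota^*c_1(\SheafHom(V,G))$.

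The key local computation on $D$ is then $\iota^*c_1(\SheafHom(V,G))=(2n-2)c_1(G|_D)$. Indeed, as $V$ is locally free of rank $2n-2$ and $G$ has rank $\rank(F)$, one has $c_1(\SheafHom(V,G))=-\rank(F)\,c_1(V)+(2n-2)c_1(G)$; upon restriction to $D$ the term $c_1(V|_D)=c_1(\ell^\perp/\ell)=p^*c_1(TX)$ vanishes, since $c_1(TX)=0$ for an irreducible holomorphic symplectic manifold. It remains to compare $c_1(G|_D)$ with $c_1(G')$. Because $G$ is saturated in $V$, the quotient $V/G$ is torsion-free, so $\Tor_1(V/G,\StructureSheaf{D})=0$ and the restriction $G|_D\to V|_D=\ell^\perp/\ell$ is injective; its image saturates to $G'$, so that $c_1(G')-c_1(G|_D)$ is the class of an effective divisor $Z$ on $D$. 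Since $2n-2>0$ and $\int_D(p^*\omega)^{2n-1}h^{2n-1}[Z]\geq 0$ by Lemma \ref{lemma-effective-divisor-in-D}, replacing $c_1(G|_D)$ by $c_1(G')$ only increases the integral, producing the stated bound.

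The step I expect to be the main obstacle is this final comparison of $c_1(G|_D)$ with $c_1(G')$: one must confirm that the restriction $G|_D\to\ell^\perp/\ell$ is genuinely injective, so that no torsion kernel contributes a divisor of the opposite sign, and that the saturation defect is a bona fide effective divisor, so that Lemma \ref{lemma-effective-divisor-in-D} applies with the correct orientation. The injectivity rests precisely on the saturatedness of $G$ inside the locally free sheaf $V$, through the vanishing of $\Tor_1(V/G,\StructureSheaf{D})$, and this vanishing also ensures that $\iota^*c_1(G)=c_1(G|_D)$, a point worth checking carefully.
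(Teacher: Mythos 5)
Your proposal is correct, and it follows the paper's route almost step for step: the K\"{u}nneth/binomial reduction of $\deg_{\tilde{\omega}}(\SheafHom(E,F))$ to $\Choose{4n-1}{2n}\left(\int_X\omega^{2n}\right)\int_X\omega^{2n-1}\delta^*c_1(\SheafHom(E,F))$, the transfer to $D$ via Lemma~\ref{lemma-c-1-of-Hom-is-push-forward-of-c-1-of-Hom} together with the blow-up decomposition, $\iota^*[D]=-h$, and $p_*(h^{2n})=0$, then the identification $\iota^*c_1(\SheafHom(V,G))=(2n-2)\left(c_1(G')-[Z]\right)$ with $Z$ effective, and finally Lemma~\ref{lemma-effective-divisor-in-D}. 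The one place where you genuinely deviate is the justification of that identification: you obtain injectivity of $G|_D\rightarrow \ell^\perp/\ell$ from the vanishing of $\Tor_1(V/G,\StructureSheaf{D})$, which holds because $V/G$ is torsion free ($G$ being saturated) so that multiplication by a local equation of $D$ is injective on it, and similarly $\Tor_1(G,\StructureSheaf{D})=0$ (as $G\subset V$ is torsion free) gives $\iota^*c_1(G)=c_1(G|_D)$. The paper argues differently: it uses reflexivity of $G$, hence the codimension~$\geq 3$ singular locus, to get $c_1(L\iota^*G)=c_1((\iota^*G)_{fr})$, and the codimension~$\geq 2$ degeneracy locus of $G\rightarrow V$ (again from saturatedness) to embed $(\iota^*G)_{fr}$ into $G'$. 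Your Tor-theoretic argument is more direct and avoids invoking reflexivity altogether, since a divisor has a length-one locally free resolution so all the relevant higher Tor sheaves are controlled; both arguments ultimately rest on the same input, namely that $G$ is saturated in the locally free sheaf $V$.
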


\begin{proof}
Let $a,b\in H^2(X,\Integers)$ be the classes satisfying $c_1(\SheafHom(E,F))=f_1^*a+f_2^*b$. We have
\begin{eqnarray}
\nonumber
\deg_{\tilde{\omega}}(\SheafHom(E,F))
&=&
\int_{X\times X}\tilde{\omega}^{4n-1}c_1(\SheafHom(E,F))
\\
\nonumber
&=&
\int_{X\times X}(f_1^*\omega+f_2^*\omega)^{4n-1}(f_1^*a+f_2^*b)
\\
\nonumber
&=&
\Choose{4n-1}{2n}\left(\int_X\omega^{2n}\right)\int_X\omega^{2n-1}(a+b)
\\
\nonumber
&=& \Choose{4n-1}{2n}\left(\int_X\omega^{2n}\right)\int_X\omega^{2n-1}\delta^*c_1(\SheafHom(E,F))
\\
\label{eq-degree-via-restriction-to-diagonal}
&=& \Choose{4n-1}{2n}\left(\int_X\omega^{2n}\right)\int_X\omega^{2n-1}\delta^*\beta_*c_1(\SheafHom(V,G)),
\end{eqnarray}
where the last equality is by Lemma \ref{lemma-c-1-of-Hom-is-push-forward-of-c-1-of-Hom}.
Lemma \ref{lemma-powers-of-h} yields 
\[
\delta^*\beta_*c_1(\SheafHom(V,G))=p_*\left[h^{2n-1}p^*\delta^*(\beta_*c_1(\SheafHom(V,G)))\right]=
p_*\left[h^{2n-1}\iota^*\beta^*(\beta_*c_1(\SheafHom(V,G)))\right].
\]
The equality
$\beta_*\beta^*\beta_*=\beta_*$ implies that the difference between the classes $\beta^*(\beta_*c_1(\SheafHom(V,G)))$
and $c_1(\SheafHom(V,G))$ belongs to the kernel of $\beta_*$ and is hence  a multiple of $[D]$. The vanishing 
$p_*(h^{2n-1}\iota^*[D])=-p_*(h^{2n})=0$ yields the equality
\[
\delta^*\beta_*c_1(\SheafHom(V,G))=p_*\left[h^{2n-1}\iota^*(c_1(\SheafHom(V,G)))\right].
\]

We have $\iota^*c_1(\SheafHom(V,G))=c_1(L\iota^*\SheafHom(V,G))=c_1([\ell^\perp/\ell]^*\otimes L\iota^*(G))$. 
The sheaf $G$ is reflexive, by construction, 
%Lemma \ref{lemma-G-is-reflexive}, 
and hence its singular locus has codimension $\geq 3$ in $Y$. 
It follows that $c_1(L\iota^*G)=c_1(\iota^*G)=c_1((\iota^*G)_{fr})$. 
%We have the short exact sequence 
%\[
%0\rightarrow G\rightarrow V\rightarrow Q\rightarrow 0
%\]
%defining $Q$. We get the exact sequence
%\[
%0\rightarrow \SheafTor_1(Q,\iota_*\StructureSheaf{D})
%\rightarrow \iota^*G
%\rightarrow \iota^*(V)\rightarrow \iota^*(Q)\rightarrow 0.
%\]
%The torsion subsheaf of $\iota^*G$ is supported on a subscheme of $D$ of codimension $2$. 
The subscheme of $Y$, where the rank of the homomorphism $G\rightarrow V$ is lower than the rank of $G$, has codimension at least $2$ in $Y$, since $G$ is a saturated subsheaf of $V$. Hence, 
the natural homomorphism $(\iota^*G)_{fr}\rightarrow G'$ is injective.
We conclude that $c_1(L\iota^*G)+[Z]=c_1(G')$, for some effective divisor $Z$ on $D$. 
The rank $2n-2$ vector bundle $\ell^\perp/\ell$ is symplectic, and hence $c_1(\ell^\perp/\ell)=0$. 
Hence
\[
\iota^*c_1(\SheafHom(V,G))=(2n-2)\left(c_1(G')-[Z]\right).
\]

The Projection Formula and the two displayed formulas above yield
\[
\int_X\omega^{2n-1}\delta^*\beta_*c_1(\SheafHom(V,G))=(2n-2)\int_D(p^*\omega)^{2n-1} h^{2n-1}\left(c_1(G')-[Z]\right).
\]
Lemma \ref{lemma-effective-divisor-in-D} yields the inequality
\[
\int_X\omega^{2n-1}\delta^*\beta_*c_1(\SheafHom(V,G))\leq (2n-2)\int_D(p^*\omega)^{2n-1} h^{2n-1}c_1(G').
\]
Lemma \ref{lemma-degree-of-Hom-E-F-in-terms-of-G-prime} follows from the above inequality and Equation
(\ref{eq-degree-via-restriction-to-diagonal}).
\end{proof}

\begin{lem}
\label{lemma-c-1-G-prime-not-multiple-of-h}
$c_1(G') = p^*\alpha- kh$, for a non-zero class $\alpha\in H^2(X,\Integers)$ and a positive integer $k$.
\end{lem}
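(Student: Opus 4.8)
The plan is to use the projective-bundle decomposition $H^2(D,\Integers)=p^*H^2(X,\Integers)\oplus\Integers h$ already invoked in the proof of Lemma \ref{lemma-effective-divisor-in-D}, so that I may write $c_1(G')=p^*\alpha+mh$ for a unique class $\alpha\in H^2(X,\Integers)$ and a unique integer $m$. The assertion then becomes $m<0$ (whence $k:=-m>0$) together with $\alpha\neq 0$. Since $G'\subset \ell^\perp/\ell$ is saturated of rank $r':=\rank(G')=\rank(F)$ with $0<r'<2n-2$, the induced map $\Wedge{r'}G'\to \Wedge{r'}(\ell^\perp/\ell)$ is nonzero; as $\Wedge{r'}(\ell^\perp/\ell)$ is locally free it extends over the reflexive hull $\det(G')$ of $\Wedge{r'}G'$ (a line bundle, $D$ being smooth) to a nonzero map $\det(G')\hookrightarrow \Wedge{r'}(\ell^\perp/\ell)$. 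Equivalently, with $\det(G')=p^*\StructureSheaf{X}(\alpha)\otimes \ell^{-m}$, the bundle $p^*\StructureSheaf{X}(-\alpha)\otimes \ell^{m}\otimes \Wedge{r'}(\ell^\perp/\ell)$ carries a nonzero global section. This section is the object I would analyze.

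First I would show $m<0$. Restricting the inclusion $G'\hookrightarrow \ell^\perp/\ell$ to a general fibre $\PP(T_xX)\cong \PP^{2n-1}$ of $p$ yields a nonzero subsheaf of rank $r'$ of $(\ell^\perp/\ell)|_{\PP^{2n-1}}$, whose first Chern class is $m$ times the hyperplane class because $p^*\alpha$ restricts to $0$. The restriction $(\ell^\perp/\ell)|_{\PP^{2n-1}}$ is the symplectic reduction of the trivial symplectic bundle $T_xX\otimes\StructureSheaf{}$ along the tautological line; it is an irreducible homogeneous bundle under $Sp(T_xX)$, hence slope-stable, and of slope $0$ since $c_1(\ell^\perp/\ell)=0$. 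Therefore every proper nonzero subsheaf has negative slope, so $m/r'<0$ and $m<0$; I set $k:=-m>0$.

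The remaining, harder step is $\alpha\neq 0$. Suppose $\alpha=0$. Then the nonzero section above lives in $H^0\!\left(D,\ell^{m}\otimes \Wedge{r'}(\ell^\perp/\ell)\right)=H^0\!\left(X,p_*\bigl(\ell^{m}\otimes \Wedge{r'}(\ell^\perp/\ell)\bigr)\right)$. I would compute the direct image by relative Borel–Weil–Bott along the fibres $\PP(T_xX)=Sp(T_xX)/P$: it is the $Sp(n)$-bundle associated to the $Sp(T_xX)$-representation $H^0\bigl(\PP^{2n-1},\ell^{m}\otimes \Wedge{r'}(\ell^\perp/\ell)\bigr)$. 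The point is that, because $m\neq 0$, the $GL_1$-factor of the Levi of $P$ acts on $\ell^{m}$ by a nontrivial character, so this representation contains no trivial $Sp(T_xX)$-summand. By the Bochner principle on the irreducible holomorphic symplectic manifold $X$ — the mechanism recorded in Lemma \ref{lemma-sym-TX} — a global holomorphic section of a bundle associated to a representation with no trivial $Sp(n)$-summand must vanish. Concretely, I would decompose $p_*\bigl(\ell^{m}\otimes \Wedge{r'}(\ell^\perp/\ell)\bigr)$ using the tautological sequences $0\to \ell\to p^*TX\to q\to 0$ and $0\to \ell^\perp/\ell\to q\to \ell^*\to 0$ into summands of the form $\bigl(\Wedge{j}TX\bigr)\otimes \Sym^{t}(T^*X)$ with $t=k\geq 1$ forced by the twist, and invoke Lemma \ref{lemma-sym-TX} to annihilate their global sections. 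This contradicts the existence of the nonzero section, so $\alpha\neq 0$.

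The main obstacle is precisely this last computation: carrying out the relative Borel–Weil–Bott decomposition of $p_*\bigl(\ell^{m}\otimes \Wedge{r'}(\ell^\perp/\ell)\bigr)$ and verifying that every summand which occurs is associated to a \emph{nontrivial} $Sp(n)$-representation — equivalently, that it is built from $\Sym^{t}(T^*X)$ with $t\geq 2$ together with exterior powers of $TX$, so that Lemma \ref{lemma-sym-TX} applies verbatim. The delicate case is $k=1$, where the crude estimate only produces $\Sym^{1}(T^*X)=T^*X$; there one must exploit the precise structure of the subbundle $\ell^\perp/\ell\subset q$ (rather than the full quotient $q$), since passing to $q$ introduces section-carrying summands such as $\Sym^{k+1}(T^*X)$ and $T^*X\otimes \Wedge{j}TX$ that do not survive into the image of $\det(G')$.
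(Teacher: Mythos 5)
Your proposal has the same skeleton as the paper's proof: the decomposition $c_1(G')=p^*\alpha+mh$, the deduction $m<0$ from slope-stability of the restriction of $\ell^\perp/\ell$ to a fibre of $p$ (the paper cites \cite[Lemma 7.4]{torelli} for this stability, while you sketch it via stability of irreducible homogeneous bundles on $\PP^{2n-1}$ --- an acceptable substitute), and, assuming $\alpha=0$, the production of a nonzero section of $\Wedge{g}[\ell^\perp/\ell]\otimes\ell^{-k}$, $g=\rank(G')$, to be contradicted. The genuine gap is the step you yourself flag as ``the main obstacle'': you never prove the required vanishing $H^0\left(D,\Wedge{g}[\ell^\perp/\ell]\otimes\ell^{-k}\right)=0$, and the concrete reduction you propose cannot give it. Any estimate landing in summands $(\Wedge{j}TX)\otimes\Sym^{t}(T^*X)$ with $t=k$ is useless when $k=1$: Lemma \ref{lemma-sym-TX} requires $t>1$, and the $t=1$ vanishing is simply false (the identity endomorphism is a nonzero element of $H^0(X,TX\otimes T^*X)$, and via $TX\cong T^*X$ the symplectic form produces nonzero elements of $H^0(X,(\Wedge{j}TX)\otimes T^*X)$ for odd $j$). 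Your abstract alternative (relative Borel--Weil plus the remark that the central $GL_1$ of the Levi acts on the fibre representation by the nontrivial character $m$, so no trivial $Sp(n)$-summand occurs in $H^0$ of a fibre) is sound representation theory, but to conclude you would need a Bochner principle for arbitrary holonomy-associated bundles, which is strictly more than Lemma \ref{lemma-sym-TX} records, and you neither prove it nor reduce to it.

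The paper closes exactly this gap with one additional exact sequence, exploiting that $\ell$ is isotropic, so that $\ell\subset\ell^\perp$ and wedging with $\ell$ embeds $\Wedge{g}[\ell^\perp/\ell]$ into $\ell^{-1}\otimes\Wedge{g+1}\ell^\perp$. Precisely, sequence (\ref{eq-short-exact-with-wedge-ell-perp-mod-ell}) with $j=g+1$, $t=k+1$ exhibits $\Wedge{g}[\ell^\perp/\ell]\otimes\ell^{-k}$ as a subbundle of $(\Wedge{g+1}\ell^\perp)\otimes\ell^{-k-1}$; then the inclusion $\ell^\perp\subset p^*TX$ and left-exactness of $p_*$ give $p_*\left[(\Wedge{g+1}\ell^\perp)\otimes\ell^{-k-1}\right]\subset(\Wedge{g+1}TX)\otimes\Sym^{k+1}(T^*X)$. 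The extra twist by $\ell^{-1}$ raises the symmetric degree from $k$ to $k+1\geq 2$, so Lemma \ref{lemma-sym-TX} applies uniformly, including in your delicate case $k=1$. This is the missing idea: trade one exterior power of the quotient bundle $\ell^\perp/\ell$ for one of the subbundle $\ell^\perp$ at the cost of one power of $\ell^{-1}$, rather than trying to realize $\Wedge{g}[\ell^\perp/\ell]$ (which is a quotient, not a subsheaf, of $\Wedge{g}\ell^\perp$) directly inside a pulled-back tensor bundle.
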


\begin{proof}
There exist an integer $k$ and a class $\alpha\in H^2(X,\Integers)$ satisfying $c_1(G') = p^*\alpha- kh$, since
$H^2(D,\Integers)=p^*H^2(X,\Integers)\oplus\Integers h.$ The integer $k$ is positive, since the restriction of
$\ell^\perp/\ell$ to each fiber of $p:D\rightarrow X$ is slope-stable, by \cite[Lemma 7.4]{torelli}.

The rest of the proof is by contradiction. Assume that $\alpha=0$. Let $g$ be the rank of $G$. 
Then $0<g<2n-2$ and the top exterior power of $G'$ yields a line subbundle of $\Wedge{g}[\ell^\perp/\ell]$ isomorphic to 
$\ell^k$. In particular, the vector space
\begin{equation}
\label{eq-space-containing-global-section-associated-to-G-prime}
H^0(D,\Wedge{g}[\ell^\perp/\ell]\otimes \ell^{-k})
\end{equation}
contains a non-zero section.
% $\gamma$ of minimal generic rank $g$, in the sense that the image of
%the induced contraction homomorphism 
%$\rfloor \gamma:\Wedge{g-1}[\ell^\perp/\ell]^*\otimes \ell^k\rightarrow \ell^\perp/\ell$ 
%is equal to $G'$, away from the singular locus of $G'$, and hence has rank $g$.

%Choose a global holomorphic symplectic form $\sigma$ on $X$.
%Identify $TX$ with $T^*X$ via $\sigma$. 
We have the short exact sequence
\[
0\rightarrow \ell^\perp\rightarrow p^*TX\rightarrow \ell^{-1}\rightarrow 0.
\]
Dualizing, we get
\[
0\rightarrow \ell\rightarrow p^*T^*X\rightarrow (\ell^\perp)^*\rightarrow 0.
\]
Hence also the short exact 
\[
0\rightarrow \ell\otimes \Wedge{j-1}(\ell^\perp)^*\rightarrow p^*\Wedge{j}T^*X\rightarrow \Wedge{j}(\ell^\perp)^*\rightarrow 0.
\]
Dualizing the latter and tensoring by $\ell^{-t}$ we get the short exact sequence
\[
0\rightarrow \Wedge{j}(\ell^\perp)\otimes \ell^{-t}\rightarrow p^*\left(\Wedge{j}TX\right)\otimes \ell^{-t}\rightarrow 
\Wedge{j-1}(\ell^\perp)\otimes \ell^{-t-1}\rightarrow 0.
\]
The inclusion of $\Wedge{j-1}(\ell^\perp)\otimes \ell^{-t-1}$ in $p^*\left(\Wedge{j-1}TX\right)\otimes \ell^{-t-1}$ yields
the left exact sequence\footnote{Under the identification of $TX$ with $T^*X$ via the symplectic form, the rightmost homomorphism is the homomorphism  
$\left(\Wedge{j}TX\right)\otimes \Sym^t(TX) \rightarrow
\left(\Wedge{j-1}TX\right)\otimes \Sym^{t+1}(TX)$ 
appearing in the Koszul complex. We will not use this fact.}
\[
0\rightarrow p_*\left[\Wedge{j}(\ell^\perp)\otimes \ell^{-t}\right]
\rightarrow 
\left(\Wedge{j}TX\right)\otimes \Sym^t(T^*X)
\rightarrow
\left(\Wedge{j-1}TX\right)\otimes \Sym^{t+1}(T^*X).
\]

The vector space $H^0\left(X,\left(\Wedge{j}TX\right)\otimes \Sym^t(T^*X)\right)$ vanishes, for $t>1$, by Lemma \ref{lemma-sym-TX}.
We conclude that $H^0\left(D,\Wedge{j}(\ell^\perp)\otimes\ell^{-t}\right)$
vanishes, for all pairs $(j,t)$ of integers satisfying $j\geq 0$ and $t>1$.
The short exact sequence 
$0\rightarrow \ell\rightarrow \ell^\perp\rightarrow [\ell^\perp/\ell]\rightarrow 0$
yields 
\[
%0\rightarrow \StructureSheaf{D}\rightarrow \ell^\perp\otimes\ell^{-1}\rightarrow [\ell^\perp/\ell]\otimes \ell^{-1}\rightarrow 0. 
0\rightarrow \ell\otimes\Wedge{j-1}[\ell^\perp/\ell]\rightarrow \Wedge{j}\ell^\perp\rightarrow \Wedge{j}[\ell^\perp/\ell]\rightarrow 0.
\]
Tensoring by $\ell^{-t}$ we get
\begin{equation}
\label{eq-short-exact-with-wedge-ell-perp-mod-ell}
0\rightarrow \Wedge{j-1}[\ell^\perp/\ell]\otimes\ell^{1-t}\rightarrow \left(\Wedge{j}\ell^\perp\right)\otimes\ell^{-t}\rightarrow 
\Wedge{j}[\ell^\perp/\ell]\otimes\ell^{-t}\rightarrow 0.
\end{equation}
We conclude that $H^0\left(D,\Wedge{j-1}[\ell^\perp/\ell]\otimes\ell^{1-t}\right)$ vanishes, for all pairs $(j,t)$ of integers satisfying $j\geq 1$ and $t>1$. The vanishing of the space
(\ref{eq-space-containing-global-section-associated-to-G-prime}) follows, by taking $j=g+1$ and $t=k+1$.
This provides the desired contradiction.
%*******
% Hide
%*******
\hide{
It remains to prove the vanishing of the space
(\ref{eq-space-containing-global-section-associated-to-G-prime}) for $k=1$. In this case 
%$\gamma$ belongs 
$H^0(D,\Wedge{g}(\ell^\perp)\otimes \ell^{-1})$
maps isomorphically onto the kernel 
$H^0\left(X,\Wedge{g+1}T^*X\right)$ 
of
\[
H^0\left(X,\left(\Wedge{g}T^*X\right)\otimes T^*X\right)\rightarrow 
H^0\left(X,\left(\Wedge{g-1}T^*X\right)\otimes \Sym^2(T^*X)\right).
\]
It follows that $g$ is odd and 
$H^0\left(D,\Wedge{g}(\ell^\perp)\otimes \ell^{-1}\right)$ is one-dimensional.
%$\gamma$ corresponds to a scalar multiple of $\sigma^{\frac{g+1}{2}}$.
%We treat the cases $g=1$ and $g>1$ separately.

%\underline{Case $g=1$:} 
%The short exact sequence 
%$0\rightarrow \ell\rightarrow \ell^\perp\rightarrow [\ell^\perp/\ell]\rightarrow 0$
%yields 
%\[
%0\rightarrow \StructureSheaf{D}\rightarrow \ell^\perp\otimes\ell^{-1}\rightarrow [\ell^\perp/\ell]\otimes \ell^{-1}\rightarrow 0. 
%0\rightarrow \ell\otimes\Wedge{g-1}[\ell^\perp/\ell]\rightarrow \Wedge{g}\ell^\perp\rightarrow \Wedge{g}[\ell^\perp/\ell]\rightarrow 0.
%\]
%Tensoring by $\ell^{-1}$ we get
Consider the case $k=1$ of (\ref{eq-short-exact-with-wedge-ell-perp-mod-ell})
\[
0\rightarrow \Wedge{g-1}[\ell^\perp/\ell]\rightarrow \left(\Wedge{g}\ell^\perp\right)\otimes\ell^{-1}\rightarrow 
\Wedge{g}[\ell^\perp/\ell]\otimes\ell^{-1}\rightarrow 0.
\]
Let $\bar{\sigma}$ be the global section of $H^0\left(D,\Wedge{2}[\ell^\perp/\ell]\right)$ induced by $p^*\sigma$. Then
$H^0\left(D,\Wedge{g-1}[\ell^\perp/\ell]\right)$ contains $\bar{\sigma}^{(g-1)/2}$ and it injects into the one dimensional space
$H^0\left(D,\Wedge{g}(\ell^\perp)\otimes \ell^{-1}\right)$. 
%We have seen that $H^0(D,\ell^\perp\otimes\ell^{-1})$ is isomorphic to $H^0\left(X,\Wedge{2}T^*X\right)$ and is thus one dimensional.
Considering the long exact sequence of sheaf cohomologies associated to the above sequence 
we conclude that the connecting homomorphism
\[
H^0\left(D,\Wedge{g}[\ell^\perp/\ell]\otimes\ell^{-1}\right)
\rightarrow H^1\left(D,\Wedge{g-1}[\ell^\perp/\ell]\right)
\]
is injective.
%and the vanishing of $H^1(D,\StructureSheaf{D})$ we conclude that $H^0(D,[\ell^\perp/\ell]\otimes \ell^{-1})$ vanishes. 
%*******
% End Hide
%*******
}
%*******
% Hide
%*******
\hide{
\underline{Case $g>1$:}
The contraction homomorphism
\[
\rfloor\sigma^{\frac{g+1}{2}}:\Wedge{g}TX\rightarrow T^*X
\]
is surjective. The pullback of the latter restricts to a surjective homomorphism
\[
p^*\left(\Wedge{g-1}TX\right)\otimes \ell \rightarrow \ell^\perp,
\]
which in turn restricts to a homomorphism
\[
\left(\Wedge{g-1}\ell^\perp\right)\otimes \ell \rightarrow \ell^\perp.
\]
We claim that the latter homomorphism has rank $1$ and image $\ell$.

The latter  is the pullback of a  homomorphism
\[
\left(\Wedge{g-1}[\ell^\perp/\ell]\right)\otimes \ell \rightarrow \ell^\perp,
\]
%*******
% End Hide
%*******
}
\end{proof}

%***********************************************
%
%***********************************************
\section{Proof of Theorem \ref{thm-stability}}
\label{sec-proof-of-main-theorem}
%\begin{proof}[{\bf Proof of Theorem \ref{thm-stability}}]
The proof of Theorem \ref{thm-stability} reduces to that of the inequality 
\[
\int_D(p^*\omega)^{2n-1}h^{2n-1}c_1(G')<0,
\] 
for every K\"{a}hler class $\omega$ and for every non-zero saturated proper subsheaf $G'$ of $\ell^\perp/\ell$,
by Lemma \ref{lemma-degree-of-Hom-E-F-in-terms-of-G-prime}. 
Set $c_1(G')=p^*\alpha-kh$ as in Lemma \ref{lemma-c-1-G-prime-not-multiple-of-h}.
The left hand side of the above inequality is equal to 
$
\int_X \omega^{2n-1}\alpha,
$
by the projection formula and the vanishing of $p_*(h^{2n})$.

Given a positive integer $N$ 
we have the sequence of inclusions
\[
p_*(G'\otimes\ell^{-N})
\subset
p_*([\ell^\perp/\ell]\otimes\ell^{-N})
\subset
p_*\left(\Wedge{2}(\ell^\perp)\otimes \ell^{-N-1}\right)
\subset
p_*\left(\Wedge{2}(p^*TX)\otimes \ell^{-N-1}\right),
%\cong 
%\left(\Wedge{2}TX\right)\otimes \Sym^{N+1}T^*X.
\]
where the second inclusion follows from the sequence (\ref{eq-short-exact-with-wedge-ell-perp-mod-ell}).
The projection formula yields the inclusion
$p_*(G'\otimes\ell^{-N})
\subset \left(\Wedge{2}TX\right)\otimes \Sym^{N+1}T^*X$. The latter is an $\omega$-polystable vector bundle with zero first Chern class. We conclude the inequality
\begin{equation}
\label{eq-degree-inequality-as-polynomial-in-N}
\int_X\omega^{2n-1}c_1(p_*(G'\otimes\ell^{-N}))\leq 0,
\end{equation}
for every K\"{a}hler class $\omega$ on $X$.
%and equality holds if and only if $p_*(G'\otimes\ell^{-N})$ coincides with a direct summand of the vector bundle 
%$\left(\Wedge{2}TX\right)\otimes \Sym^{N+1}T^*X$, away from a subvariety of codimension two in $X$. 
%In particular, the inequality above is strict, whenever $c_1(p_*(G'\otimes\ell^{-N}))$ does not vanish. 

The higher direct images $R^ip_*(G'\otimes\ell^{-N})$ vanish, for all $N$ sufficiently large. Hence, there exists an integer $N_0$, such that 
\[
c_1(p_*(G'\otimes\ell^{-N}))=c_1(Rp_*(G'\otimes\ell^{-N})),
\]
for all $N>N_0$. The right hand side is the graded summand $P(N)$ in $H^2(X,\Integers)[N]$ of the polynomial
\[
p_*\left(ch(G')\exp(Nh)td_p\right) \in H^*(X,\RationalNumbers)[N]
\]
in the variable $N$ with coefficients in the cohomology ring.
%Hence, $c_1(Rp_*(G'\otimes\ell^{-N}))=P(N)$, for some polynomial $P(N)\in H^2(X,\RationalNumbers)[N]$.
Let $g$ be the rank of $G'$.
We have $c_1(T_p)=2nh$, 
\begin{eqnarray*}
td_p&=&1+nh+\dots,
\\ 
ch(G')&=&g+(p^*\alpha-kh)+\dots
\\
exp(Nh)&=& \sum_{j=0}^{4n-1}h^j\frac{N^j}{j!}.
\end{eqnarray*}
Hence, 
the graded summand of $ch(G')\exp(Nh)td_p$ in $H^{4n}(D,\RationalNumbers)$ is a polynomial 
of degree $2n$ in $N$ whose first two leading terms are
\[
gh^{2n}\frac{N^{2n}}{2n!}+
[(p^*\alpha-kh)+gnh]h^{2n-1}\frac{N^{2n-1}}{(2n-1)!}+\dots
\]
The polynomial $P(N)$ is the image of the above polynomial under $p_*$.
The vanishing of $p_*(h^{2n})$ yields that $P(N)$ has degree $2n-1$ in $N$ and its leading term 
is
\[
P(N):=c_1(Rp_*(G'\otimes\ell^{-N}))=\frac{N^{2n-1}}{(2n-1)!}\alpha+\dots
\]
%We conclude that the inequality (\ref{eq-degree-inequality-as-polynomial-in-N}) is strict, for all $N>N_0$.
The values of the left hand side of the inequality (\ref{eq-degree-inequality-as-polynomial-in-N}) for $N>N_0$ are thus the values of a 
polynomial in $\RealNumbers[N]$ of degree $2n-1$, whose leading coefficient is 
\[
\frac{1}{(2n-1)!}\int_X\omega^{2n-1}\alpha. 
\]
The  inequality (\ref{eq-degree-inequality-as-polynomial-in-N}) thus implies the inequality 
\begin{equation}
\label{eq-not-yet-strict-inequality}
\int_X\omega^{2n-1}\alpha\leq 0,
\end{equation} 
for every K\"{a}hler class $\omega$. 

Let $\K_X$ be the K\"{a}hler cone of $X$. The map 
$\K_X\rightarrow H^{2n-1,2n-1}(X,\RealNumbers)$ sending $\omega$ to $\omega^{2n-1}$ is an open map. Indeed,
its differential at $\omega$ is the cup product  
\[
(2n-1)\omega^{2n-2}\cup:H^{1,1}(X,\RealNumbers)\rightarrow H^{2n-1,2n-1}(X,\RealNumbers),
\]
which is an isomorphism by the Hard Lefschetz Theorem. 
The class $\alpha$ does not vanish, by Lemma \ref{lemma-c-1-G-prime-not-multiple-of-h}.
Hence, the linear functional 
\[
H^{2n-1,2n-1}(X,\RealNumbers)\rightarrow \RealNumbers,
\] 
sending $\lambda$ to
$\int_X\lambda\alpha$, is an open map as well. We conclude that 
the image of $\K_X$ under the composition of these two open maps is an open subset and 
the inequality (\ref{eq-not-yet-strict-inequality}) is strict. This completes the proof of Theorem \ref{thm-stability}.
\EndProof

\begin{proof}[{\bf Proof of Corollary \ref{cor-torelli}}]
Any two points $(X,\eta,E_1)$ and $(X,\eta,E_2)$ in the fiber of $\phi$ are inseparable, by 
\cite[Theorem 6.1]{torelli}. 
If, furthermore,  $E_1$ and $E_2$ are $\tilde{\omega}$-slope-stable, with respect to the same K\"{a}hler class $\omega$ on $X$, 
then $\SheafEnd(E_1)$ and $\SheafEnd(E_2)$ are isomorphic as sheaves of algebras, by \cite[Lemma 5.3]{torelli}.
The sheaves $E_i$, $i=1,2$, are assumed to satisfy \cite[Condition 1.6]{torelli}, which implies Assumption \ref{assumption-V}, by \cite[Lemma 7.6]{torelli}. Hence, the sheaves $E_i$, $i=1,2$, 
are $\tilde{\omega}$-slope-stable with respect to every K\"{a}hler class $\omega$ on $X$, by Theorem \ref{thm-stability}.
It follows that $\SheafEnd(E_1)$ and $\SheafEnd(E_2)$ are isomorphic as sheaves of algebras, and the two triples 
$(X,\eta,E_i)$, $i=1,2$, represent the same equivalence class in $\widetilde{{\mathfrak M}}^0_\Lambda$. 
%Note that $\tilde{\omega}$-slope-stability in \cite[Def. 1.2]{torelli} 
%was defined to mean, in addition to the condition in Definition \ref{def-stability} above, 
%also that $\SheafEnd(E_i)$ are $\tilde{\omega}$-slope-polystable, but the proof of  
%\cite[Lemma 5.3]{torelli} does not use the latter property. 
\end{proof}

\begin{proof}[{\bf Proof of Corollary \ref{cor-torelli-without-marking}}]
The isometry group $O(\Lambda)$ of $\Lambda$ acts on $\fM_\Lambda$ by $g(X,\eta)\mapsto (X,g\eta)$.
Let $Mon(\Lambda)\subset O(\Lambda)$ be the subgroup leaving the connected component $\fM^0_\Lambda$ invariant.
$Mon(\Lambda)$ is determined in \cite[Theorem 1.2]{integral-constraints}, which implies, in particular, that elements of $Mon(\Lambda)$
act on the discriminant group $\Lambda^*/\Lambda\cong\Integers/(2n-2)\Integers$ as $\pm 1$.
Denote by 
\[
cov:Mon(\Lambda)\rightarrow \{\pm 1\}
\] 
the corresponding character.
The set of $Mon(\Lambda)$-orbits in $\fM_\Lambda^0$ is in bijection with the set of isomorphism classes of manifolds of $K3^{[n]}$-type. Conjugating the $Mon(\Lambda)$-action via the isomorphism $\phi$ of Corollary \ref{cor-torelli} lifts it to an action on
$\tfM^0_\Lambda$. The latter action is given by $g(X,\eta,E)\mapsto (X,g\eta,E^{cov(g)})$, where 
$E^{cov(g)}=E$, if $cov(g)=1$, and $E^{cov(g)}=E^*$, if $cov(g)=\nolinebreak -1$, by the proof of \cite[Theorem 1.11]{torelli}. 
The inverse image under $\phi$ of the $Mon(\Lambda)$ orbit of $(X,\eta_0)$ in $\fM^0_\Lambda$ consists of equivalence classes of triples $(X,\eta,E)$ in a unique $Mon(\Lambda)$ orbit  of, say, $(X,\eta_0,E_0)$, by Corollary \ref{cor-torelli}. 
The above description of the monodromy action on
$\tfM^0_\Lambda$ implies that $\SheafEnd(E)$ is isomorphic as a sheaf of algebras to $\SheafEnd(E_0)$ or $\SheafEnd(E_0^*)$.
\end{proof}

{\bf Acknowledgements:}
I would like to thank Sukhendu Mehrotra and Misha Verbitsky for stimulating conversations. I thank  Sukhendu Mehrotra for his careful reading of an early draft and for his helpful comments.

%****************************************************************
% Bibliography:
%****************************************************************

\end{document}